\newcommand\CC{\mathbb C}
\newcommand\NN{\mathbb N}
\newcommand\DD{\mathcal D}
\newtheorem*{thm*}{Theorem}
\newtheorem{thm}{Theorem}[section]
\newtheorem{lem}[thm]{Lemma}
\newtheorem{prop}[thm]{Proposition}
\newtheorem{cor}[thm]{Corollary}
\theoremstyle{definition}
\newtheorem{defn}[thm]{Definition}
\theoremstyle{remark}
\newtheorem{remark}[thm]{Remark}
\theoremstyle{plain}
\title[The Algebra of Differential Operators for a Matrix Weight.\\ An ultraspherical example.]{The Algebra of Differential Operators for a Matrix Weight.\\ An ultraspherical example.}
\author{Ignacio N. Zurri\'an}
\address{Facultad de Matem\'aticas,
Pontificia Universidad Cat\'olica de Chile. Macul, Santiago, CHILE.}  
\email{zurrian@famaf.unc.edu.ar}  
\thanks{ This research was supported through the programme  ``Oberwolfach Leibniz Fellows" by the Mathematisches Forschungsinstitut Oberwolfach in 2015 and CONICET Argentina. }
\subjclass[2010]{13N10; 16S32; 33C45; 35P05}
\keywords{Matrix Orthogonal Polynomials, Matrix Differential Operators, Bispectral Problem, Differential Operators Algebra.}
\begin{document}

\begin{abstract}
In this paper we study in detail  algebraic 
properties of the algebra $\mathcal D(W)$ of differential operators  associated to a matrix weight of Gegenbauer  type. 
We prove that two second order operators generate the algebra, indeed $\mathcal D(W)$ is isomorphic to the free algebra generated by two elements subject to certain relations. Also, the center is isomorphic to the affine algebra of a singular rational curve. The algebra $\mathcal D(W)$ is a finitely-generated torsion-free module over its center, but it is not flat and therefore it is not projective.  

This is the second  detailed study of an algebra $\mathcal D(W)$ and the first one coming from spherical functions and group representations. We prove that the algebras  for different Gegenbauer weights and the algebras studied previously, related to Hermite weights, are isomorphic to each other.
We give some general results that allow us
to regard the algebra $\mathcal D(W)$ as the centralizer of its center in the Weyl algebra. 
We do believe that this should hold  for any  irreducible weight and the case considered in this paper represents a good step in this direction.
\end{abstract}

\maketitle

\section{Introduction}

The subject of matrix-valued orthogonal polynomials was introduced by M.G. Krein more than sixty years ago, see \cite{K49,K71}. In the scalar valued context, the orthogonal polynomial satisfying second order differential equations played a very important role in many areas of mathematics and its applications. 
A.J. Dur\'an started the study of matrix-weights whose inner product admits a symmetric second order matrix differential operator in \cite{D97}, following similar considerations by S. Bochner in the scalar case in \cite{B29}.  A general approach to the case of higher order can be found in \cite{DG86,GH97}.



Motivated by J.A. Tirao's work on the theory of matrix-valued spherical functions,
see \cite{T77}, one finds in \cite{GPT02a,GPT03,G03} the first examples of situations as those considered in \cite{D97};  in \cite{DG04} one can find the development of a general method that leads to introduce and study classes of examples of orthonormal matrix polynomials.
In the last years, one can see a growing number of papers devoted to
different aspects of this question. For some of these recent papers, see 
\cite{GPT01,GPT02b,T03,GPT04,DG05a,DG05b,DG05c,DG05d,
CG05,GPT05,M05,PT07a}
as well as 
\cite{DI08,DdI08,PR08,GIV11,I11,KvPR12,KvPR13,CI14,vPR14,AKR15}.


For a given matrix weight $W(x)$ of size $N$ on the real line, on can consider a sequence of matrix orthogonal polynomials with respect to $W$. Then, it is natural to think in  the  algebra $\mathcal D(W) $ of matrix differential operators having as eigenfunctions all these polynomials.
%
A
first attempt to go beyond the existence of nontrivial elements in $\mathcal D(W)$ and to study the full algebra (experimentally, with the assistance of symbolic computation)
is undertaken in \cite{CG06}.
In \cite{T11} we can find a deep study of an algebra $\mathcal D(W)$. In that case the author considered the simplest possible example, which was one of the five examples included earlier in \cite{CG06}.

We wish to understand some algebraic properties of the algebra $\mathcal D(W)$ and in this work we consider an example coming from group representation theory.
 More precisely, our weight $W$ and its orthogonal polynomials were studied in \cite{PZ15} based on \cite{TZ14b}. Namely,
$$
  W(x)= W_{p,n}=(1-x^2)^{\tfrac n2 -1} \begin{pmatrix}
  p\,x^2+n-p & -nx\\ -nx & (n-p)x^2+p
\end{pmatrix},\quad x\in [-1,1],
$$ for real parameters $0<p<n/2$.
This was the example employed in the study of the first case of the time and band limiting problem for matrix-valued functions (see \cite{GPZ15}).
    
This algebra is of interest by itself. 
Even though it is not commutative and in the context of bispectrality one of the variables is discrete, there are strong motivations coming from the classical theory of commuting operators.  
Among all the equations that are considered in the framework of the inverse scattering method we can mention the Korteweg-de Vries equation and its two dimensional generalization, the Kadomtsev-Petviashvili equation, the non-linear Schr\"odinger equation, the sine-Gordon equation and many other fundamental equations of modern mathematical physics. 
All these equations can be represented as compatibility conditions for an overdetermined system of auxiliary linear problems. For example, for the KdV equation, this system has the form 
$$
D\psi=0,\qquad  \psi_t=E\psi,
$$
with 
$$
D=-\tfrac{\partial^2}{\partial x^2}+u(x,t),\qquad E=\tfrac{\partial^3}{\partial x^3}-\tfrac32\,u\,\tfrac{\partial}{\partial x}-\tfrac34\, u_x.
$$
The compatibility of this system implies 
\begin{equation*}
\left[\tfrac{\partial}{\partial t}-E ,D\right]=0\qquad  \Longleftrightarrow\qquad D_t=\left[E,D\right].
\end{equation*}
This operator equation is called ``Lax equation''; each Lax equation is an infinite-dimensional analogue of the completely integrable systems. For the KdV equation, Novikov considered the restriction to the space of stationary solutions, which is essentially equivalent to consider a particular case of the more general problem of the classification of commuting ordinary differential operators with scalar coefficients. As a pure algebraic problem it was considered by Burchnall and Chaundy in \cite{BC23,BC28}; 
given two commuting operators $D$ and $E$ of order $s$ and $\ell$, respectively, there exists a polynomial $R(\lambda,\mu)$ in two variables such that 
$$
R(D,E)=0.
$$
If $s$ and $\ell$ are coprime then for each point $q=(\lambda,\mu)$ of the curve $\Gamma$, that is defined in $\CC^2$ by the equation $R(\lambda,\mu)=0$, there corresponds a unique (up to constant factor) common eigenfunction $\psi(x,q)$ of $D$ and $E$:
$$
D\psi(x,q)=\lambda \psi(x,q),
\qquad
E\psi(x,q)=\mu\psi(x,q).
$$
These commuting pairs are classified by a set of algebro-geometric data, see \cite{K81,M77,T05}.

Of course, working in a scalar context is crucial, as one would expect; if we are dealing with a noncommutative ring of coefficients, such as the ring of matrices, things are more complicated. By considering only commuting differential operators of order two one has a very illuminating contrast.
In the scalar case the coefficients of the highest derivatives of the commuting operators have to be the same up to a constant factor, thus one is essentially dealing with an operator of order two and another of order one. In the matrix case this is more complicated. In \cite{KV01} one can find an interesting discussion and results on commuting matrix differential operators. However, a natural source is the algebra $\mathcal D(W)$,
with special attention to its center or even to the abelian maximal subalgebras. Particularly, we prove that our algebra $\mathcal D(W)$ is the centralizer of its center in the Weyl algebra $\mathbf D$ given by 
$${\mathbf D} =\{D=\sum_{i=0}^s\partial^i F_i: F_i \in M_N[x]\}.$$
We prove that for any weight $W$, under certain conditions, this presentation of the algebra in terms of its center is still valid. We expect that
this presentation is always possible for any irreducible weight.



\smallskip

Now, we proceed to describe the contents of each section.
In Section \ref{Pre} we give a  brief review of some general notions and properties of the algebras $\mathcal D(W)$.

In Section \ref{Geg} we introduce our Gegenbauer example.
We prove that our weights $W_{p,n}$, $0<p<n/2$, are non-similar each other.
Even though this weight is very different from the weight considered in \cite{T11}, we take advantage of some ingenious tools used by Tirao to simplify some of the conditions satisfied by the operators in $\mathcal D(W)$ and, at the beginning, we follow the line of argument in \cite{T11}.

In Section \ref{Ord} we prove that for any operator 
$$D=\sum_{\ell=0}^s \partial^\ell F_\ell(x)\in\mathcal D(W),$$
if  $F_s\neq{\bf0}$ then $\operatorname{deg}(F_s)=s$ and $s$ is even. Even more, we have four operators $D_1,D_2,D_3,D_4$ of order two that generate the algebra $\mathcal D(W)$. 

In Section \ref{Str} we prove that $\mathcal D(W)$ is isomorphic to the free algebra generated by two elements, $A$ and $B$, with the following relations
\begin{align*}
B^2A-AB^2&=0,& 
B  {A}^{2}+{A}^{2}  B-2\,A  B  A-B&=0,\\
 B  A  B+{A}^{3}-2\,A  {B}^{2}-A  &=0,&
 {B}^{3}-2{A}^{2}  B+A  B  A  &=0.
\end{align*}
We also prove that all these non-similar weights $W_{p,n}$ have isomorphic algebras $\mathcal D(W_{p,n})$.

Also, in spite of the fact that the operators in \cite{T11} are clearly  different to the operators here, we prove that the algebras are isomorphic. 

In Section \ref{Cen} we focus our attention on the center $\mathcal {Z}(W)$ of the algebra $\mathcal D(W)$, proving that it is generated by two operators ${C_1}$ and ${C_2}$ of order four and six, respectively. Even more, the center of the algebra $\mathcal D(W_{p,n})$ is isomorphic to the affine algebra of the following singular rational curve:
$$
x^3-y^2=(n-2p)\,x\,y.
$$
In the process of determining whether this is an Azumaya algebra or not, we prove that the algebra $\mathcal D(W)$ is a finitely-generated torsion-free module over the ring $\mathcal Z(W)$, but it is not flat and therefore not projective. 

In Section \ref{weyl} we prove that under certain conditions, the algebra $\mathcal D(W)$ is the centralizer of its center in the Weyl algebra.

\smallskip

We are convinced that the study of these algebras can be useful for a better understanding of the theory of matrix orthogonal polynomials that satisfy differential equations of order two. There is recent work \cite{TZ15}, which includes 
 very simple and direct criteria for the reducibility of a matrix weight $W$ based on the algebra $\mathcal D(W)$.
After carrying out  the comparison and deep study of these algebras, we believe that a natural step is to start a classification of matrix differential operators associated with matrix weights, aiming at giving a description of the matrix commuting operators in algebro-geometric terms.
 

\section{Preliminaries}\label{Pre}

Let $W = W (x)$ be a weight matrix of size $N\in\mathbb N$ on the real line. By this we mean
a complex $N \times N$ matrix-valued integrable function on the interval $(a, b)$ such that
$W (x)$ is positive definitive almost everywhere and with finite moments of all orders.
From now on we shall denote by $M_N$ the algebra of all $N\times N$ matrices over $\CC$ and by
$M_N[x]$ the algebra over $\CC$ of all polynomials in $x$ with
coefficients in $M_N$. We denote $\mathbf I$ the identity of $M_N$ and by $T^*$ the transpose  conjugate of $T$ .
We introduce as in \cite{K49} and \cite{K71} the following matrix-valued Hermitian sesquilinear form in the linear space $M_N[x]$:
$$
(P, Q) =\int_a^b P (x)W (x)Q(x)^* dx.
$$

Then $M_N[x]$ is a left inner product $M_N$-module and  there exists
a unique sequence $\{Q_w\}_{w\ge0}$ of monic orthogonal polynomials. More generally, a sequence $\{P_w\}_{w\ge0}$ of matrix orthogonal polynomials is a sequence of
elements $P_w \in M_N[x]$ such that $P_w$ is of degree $w$, its leading coefficient is a non
singular matrix and $(P_w' , P_w) = 0$ for all $w' \ne w$. Then any sequence $\{P_w\}_{w\ge0}$ of
matrix orthogonal polynomials is of the form $P_w = A_w Q_w$ where $A_w \in GL_N (\mathbb C)$ is
arbitrary for each $w \ge 0$.
We come now to the notion of a differential operator with matrix coefficients
acting on matrix-valued polynomials, i.e. elements of $M_N[x]$. These operators can
be made to act on our functions either on the left or on the right (see \cite{D97}), but it turns out to be convenient  to work with differential operators acting on the right. Then, 
$D$ given by

$$D=\sum_{i=0}^s\partial^i F_i(x),\quad  \partial =\frac{d}{dx} ,
$$
acts on $P (x)$ by means of

$$PD(x)=\sum_{i=0}^s\partial^i(P)(x) F_i(x),\quad  \partial =\frac{d}{dx} .
$$
%
Notice that if $D_1$ and $D_2$ are two differential operators we have
$P (D_1D_2 ) = (P D_1)D_2$.
The following three propositions are taken from \cite{GT07}.

\begin{prop}[\cite{GT07}, Proposition 2.6]\label{Ddet}
 Let $W = W (x)$ be a weight matrix of size $N$ and let $\{Q_w\}_{w\ge0}$
be the sequence of monic orthogonal polynomials in $M_N[x]$. If
$$D=\sum_{i=0}^s\partial^i F_i(x),\quad  \partial =\frac{d}{dx} ,
$$
is a linear right-hand side ordinary differential operator of order s such that
\begin{equation}\label{auto}
 Q_wD = \Lambda_wQ_w, \qquad
\text{ for all }
\quad
 w\ge 0,
\end{equation}
with $\Lambda_w \in M_N$, then $F_i = F_i (x) \in M_N[x]$ and $\deg F_i \le i$. Moreover $D$ is determined by
the sequence $\{\Lambda_w\}_{w\ge0}.$
\end{prop}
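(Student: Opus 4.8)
The plan is to determine the coefficients $F_0,F_1,\dots,F_s$ one at a time by feeding the monic orthogonal polynomials $Q_0,Q_1,\dots$ into the eigenvalue equation \eqref{auto}, exploiting that the operator acts on the right. The crucial structural facts are that $Q_w$ is monic of degree exactly $w$, so that $\partial^i Q_w = 0$ for $i>w$ while $\partial^w Q_w = w!\,\mathbf I$. Writing out $Q_w D = \sum_{i=0}^{w}\partial^i(Q_w)\,F_i$ therefore truncates the sum at $i=w$ and isolates $F_w$ through an invertible leading coefficient.

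First I would treat the base case $w=0$. Since $Q_0=\mathbf I$, equation \eqref{auto} reads $F_0 = Q_0 D = \Lambda_0 Q_0 = \Lambda_0$, so $F_0$ is the constant matrix $\Lambda_0$, a polynomial of degree $0$. For the inductive step, suppose $F_0,\dots,F_{k-1}$ have already been shown to be polynomials with $\deg F_i\le i$ and to be expressible in terms of $\Lambda_0,\dots,\Lambda_{k-1}$ and the fixed polynomials $Q_0,\dots,Q_{k-1}$. Applying $D$ to $Q_k$ and using $\partial^k Q_k = k!\,\mathbf I$ gives
\begin{equation*}
k!\,F_k = \Lambda_k Q_k - \sum_{i=0}^{k-1}\partial^i(Q_k)\,F_i.
\end{equation*}
The right-hand side is a polynomial, being a combination of the polynomials $Q_k,\partial Q_k,\dots,\partial^{k-1}Q_k$ and the inductively known polynomials $F_i$; dividing by $k!$ shows $F_k\in M_N[x]$ and exhibits it explicitly in terms of $\Lambda_0,\dots,\Lambda_k$ and $Q_0,\dots,Q_k$. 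Here one must keep track of the noncommutativity: the factor $\Lambda_k$ multiplies $Q_k$ on the left while each $F_i$ multiplies $\partial^i(Q_k)$ on the right, but the top term $\partial^k(Q_k)\,F_k = k!\,F_k$ causes no ordering ambiguity because $\partial^k Q_k$ is a scalar multiple of $\mathbf I$.

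The degree bound then follows by comparing degrees in the displayed identity: $\deg(\Lambda_k Q_k)\le k$, while $\deg\bigl(\partial^i(Q_k)\,F_i\bigr)\le (k-i)+i = k$ using the inductive bound $\deg F_i\le i$; hence $\deg F_k\le k$. Running the induction through $k=s$ produces all coefficients $F_0,\dots,F_s$ as explicit polynomial expressions in $\Lambda_0,\dots,\Lambda_s$ and the monic orthogonal polynomials (which are themselves determined by $W$), which is exactly the assertion that $D$ is determined by the sequence $\{\Lambda_w\}_{w\ge 0}$. I do not expect a genuine obstacle here, since the argument is a triangular elimination; the only points demanding care are the bookkeeping of left-versus-right multiplication and the observation that the invertibility of the leading coefficient $k!\,\mathbf I$---a consequence of the monicity of $Q_k$---is what makes each $F_k$ solvable and unique.
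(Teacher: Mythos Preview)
Your argument is correct and is the natural triangular-elimination proof of this fact. Note, however, that the paper does not supply its own proof: it simply quotes this proposition from \cite{GT07} (see the sentence ``The following three propositions are taken from \cite{GT07}'' just before Proposition~\ref{Ddet}). There is therefore nothing in the present paper to compare against; your inductive reconstruction of the $F_k$ from $Q_0,\dots,Q_k$ and $\Lambda_0,\dots,\Lambda_k$ is exactly the standard approach one finds in the cited reference.
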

We could have written the eigenvalue matrix $\Lambda_w$ to the right of the matrix
valued polynomials $Q_w$ above. However, as shown in \cite{D97}, this only leads to uninteresting cases where the weight matrix is diagonal. 
To ease the notation, if $\nu\in \mathbb{C}$ let
$$[\nu]_i = \nu(\nu- 1)\cdots(\nu - i + 1),\qquad [\nu]_0 = 1.$$

\begin{prop}[\cite{GT07}, Proposition 2.7]\label{22}
 Let $D=\sum_{i=0}^s\partial^i F_i(x)$ satisfy \eqref{auto}, with 
 $$F_i(x)=\sum_{j=0}^iF_j^i(x).$$
 Then 
 \begin{equation}
  \Lambda_w=\sum_{j=0}^s [w]_i F_i^i\qquad \text{for all }\quad w\ge0.
 \end{equation}
Hence, $w\to \Lambda_w$ is a matrix-valued polynomial function
with $\deg \Lambda_w\le \deg (D).$
 \end{prop}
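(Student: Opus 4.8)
The plan is to prove the closed form by comparing the coefficients of the top power $x^w$ on both sides of the eigenvalue equation \eqref{auto}, and then to read off the polynomial dependence on $w$ directly from that form.

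First, by Proposition \ref{Ddet} each coefficient satisfies $\deg F_i \le i$, so I may write $F_i(x) = \sum_{j=0}^i F_j^i\, x^j$ with $F_j^i \in M_N$, where $F_i^i$ is the leading matrix coefficient. Since $\{Q_w\}_{w\ge0}$ is the monic sequence, $Q_w(x) = x^w \mathbf I + (\text{terms of degree} < w)$. Differentiating, $\partial^i(Q_w)(x) = [w]_i\, x^{w-i}\,\mathbf I + (\text{lower order})$, using $\partial^i(x^w) = [w]_i\, x^{w-i}$ together with the convention $[w]_i = 0$ for $i > w$; this last point guarantees that the resulting formula is automatically valid for all $w \ge 0$, including the small values where $\partial^i(Q_w)$ vanishes for $i>w$.

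Next I would compute $Q_w D = \sum_{i=0}^s \partial^i(Q_w)(x)\, F_i(x)$ and extract the coefficient of $x^w$. A degree count shows that each summand has degree at most $(w-i) + \deg F_i \le w$, so $Q_w D$ has degree $\le w$; moreover the only contribution to the coefficient of $x^w$ comes from the product of the leading term $[w]_i\, x^{w-i}\,\mathbf I$ of $\partial^i(Q_w)$ with the leading term $F_i^i\, x^i$ of $F_i$, every lower-order factor producing a power strictly below $x^w$. Crucially, because the leading coefficient of $Q_w$ is the scalar matrix $\mathbf I$, this product is simply $[w]_i F_i^i\, x^w$, with no ordering ambiguity arising from matrix noncommutativity. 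Summing over $i$, the coefficient of $x^w$ in $Q_w D$ is $\sum_{i=0}^s [w]_i F_i^i$. On the right-hand side, $\Lambda_w Q_w = \Lambda_w\, x^w + \cdots$ has $x^w$-coefficient $\Lambda_w$. Equating the two expressions yields $\Lambda_w = \sum_{i=0}^s [w]_i F_i^i$.

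Finally, the ``hence'' assertion is immediate: each $[w]_i = w(w-1)\cdots(w-i+1)$ is a scalar polynomial in $w$ of degree exactly $i$, so $w \mapsto \Lambda_w$ is a matrix-valued polynomial in $w$ of degree at most $\max\{\, i : F_i^i \ne \mathbf{0}\,\} \le s = \deg(D)$. I do not anticipate a genuine obstacle: the entire content is a leading-coefficient comparison, and the only step demanding care is the degree bookkeeping needed to confirm that only the leading terms of $\partial^i(Q_w)$ and of $F_i$ contribute to the coefficient of $x^w$.
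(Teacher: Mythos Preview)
Your argument is correct. The paper does not supply its own proof of this proposition; it simply quotes the statement from \cite{GT07}. Your leading-coefficient comparison is precisely the standard argument one expects, and all the bookkeeping (the convention $[w]_i=0$ for $i>w$, the degree count ensuring only the product of leading terms contributes to $x^w$, and the use of monicity so that the leading coefficient $\mathbf I$ commutes) is handled correctly.
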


 Given a sequence of matrix orthogonal polynomials $\{Q_w\}_{w\ge0}$ we are interested
in the algebra $\DD(W)$ of all right-hand side differential operators with matrix-valued coefficients that have the polynomials $Q_w$ as eigenfunctions. Notice that if $Q_wD = \Lambda_wQ_w$ for some eigenvalue matrix $\Lambda_w \in M_N$, then $\Lambda_w$ is uniquely determined by $D$. In such a case we write $\Lambda_w(D) = \Lambda_w$. Thus

\begin{equation}\label{D(W)}
 \DD(W) = \{D : Q_wD = \Lambda_w(D)Q_w,\, \Lambda_w(D) \in M_N \text{ for all } w \ge 0\}.
\end{equation}

First of all we observe that the definition of $\DD(W)$ depends only on the weight
matrix $W = W(x)$ and not on the sequence $\{Q_w\}_{w\ge0}$. 
 
 \begin{prop}[\cite{GT07}, Proposition 2.8]
  Given a sequence $\{Q_w\}_{w\ge0}$ of orthogonal polynomials let the algebra $\DD(W)$ be as in \eqref{D(W)}. Then $D\to \Lambda_w(D)$ is a representation of
$\DD(W)$ into $M_N$, for each $w \ge 0$. Moreover the sequence of representations $\{\Lambda_w\}_{w\ge0}$
separates the elements of $\DD(W)$.
 \end{prop}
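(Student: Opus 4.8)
The plan is to verify the two assertions separately: first that $D\mapsto\Lambda_w(D)$ is an algebra homomorphism $\DD(W)\to M_N$ for each fixed $w$, and then that the family $\{\Lambda_w\}_{w\ge0}$ is jointly injective. Throughout, the decisive tool is the uniqueness of the eigenvalue matrix already recorded above: whenever $Q_wD=CQ_w$ for some constant $C\in M_N$, necessarily $C=\Lambda_w(D)$. The other ingredient is the associativity of the right action, $P(D_1D_2)=(PD_1)D_2$.

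For the homomorphism property, fix $w$. Additivity and $\CC$-linearity are immediate: from $Q_w(aD_1+bD_2)=a\Lambda_w(D_1)Q_w+b\Lambda_w(D_2)Q_w$ and uniqueness one reads off $\Lambda_w(aD_1+bD_2)=a\Lambda_w(D_1)+b\Lambda_w(D_2)$. The identity operator sends $Q_w$ to $Q_w=\mathbf I Q_w$, so $\Lambda_w(\mathrm{Id})=\mathbf I$. The only step needing care is multiplicativity. Using associativity,
$$Q_w(D_1D_2)=(Q_wD_1)D_2=(\Lambda_w(D_1)Q_w)D_2.$$
Here I would invoke the elementary but essential fact that left multiplication by a constant matrix $C\in M_N$ commutes with the right action of any operator $D=\sum_i\partial^iF_i$, i.e. $(CP)D=C(PD)$; this holds because $\partial^i(CP)=C\,\partial^i(P)$ for constant $C$, so each summand $\partial^i(CP)F_i$ equals $C\,\partial^i(P)F_i$. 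Applying this with $C=\Lambda_w(D_1)$ gives $(\Lambda_w(D_1)Q_w)D_2=\Lambda_w(D_1)(Q_wD_2)=\Lambda_w(D_1)\Lambda_w(D_2)Q_w$, and uniqueness yields $\Lambda_w(D_1D_2)=\Lambda_w(D_1)\Lambda_w(D_2)$, as required.

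For the separation statement it suffices, by linearity, to show that $\Lambda_w(D)=0$ for all $w\ge0$ forces $D=0$. The hypothesis means $Q_wD=0$ for every $w$. Since $\deg Q_w=w$ and each $Q_w$ is monic, the family $\{Q_w\}_{w\ge0}$ is a basis of $M_N[x]$ as a left $M_N$-module; in particular every constant polynomial $x^k\mathbf I$ is a finite left-$M_N$ combination $\sum_w C_wQ_w$. Using additivity and $(CP)D=C(PD)$ again, $(x^k\mathbf I)D=\sum_w C_w(Q_wD)=0$ for all $k\ge0$. Writing $D=\sum_{i=0}^s\partial^iF_i$ with $F_i\in M_N[x]$, one has $(x^k\mathbf I)D=\sum_{i\le k}[k]_i\,x^{k-i}F_i(x)$; an induction on $k$ (the case $k=0$ gives $F_0=0$, and the step uses $[k]_k=k!\neq0$ together with the vanishing of $F_0,\dots,F_{k-1}$) shows $F_i=0$ for every $i$, hence $D=0$.

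I expect the multiplicativity computation to be the only genuine subtlety, precisely because the operators act on the right while the eigenvalue matrices multiply on the left; the identity $(CP)D=C(PD)$ for constant $C$ is exactly what makes the two sides compatible and is the crux of the homomorphism property. The separation argument is then routine once the spanning property of the orthogonal polynomials is in hand.
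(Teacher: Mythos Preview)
The paper does not give its own proof of this proposition; it is quoted verbatim from \cite{GT07} and used as background. So there is nothing in the present paper to compare your argument against.

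That said, your proof is correct and is essentially the standard one. The homomorphism part hinges exactly on the point you isolate: for a \emph{constant} matrix $C\in M_N$ one has $(CP)D=C(PD)$ because $\partial^i(CP)=C\,\partial^i(P)$, and this is what reconciles the right action of operators with the left placement of the eigenvalue matrix. Your uniqueness appeal is legitimate since the text records, just before the definition of $\DD(W)$, that $\Lambda_w$ is uniquely determined once $Q_wD=\Lambda_wQ_w$ (the leading coefficient of $Q_w$ is invertible).

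For the separation step your argument is fine; note only that one could shortcut it by invoking the last sentence of Proposition~\ref{Ddet}, also quoted from \cite{GT07}, which already states that $D$ is determined by the sequence $\{\Lambda_w\}_{w\ge0}$. Your direct computation with $(x^k\mathbf I)D=\sum_{i\le k}[k]_i\,x^{k-i}F_i(x)$ and induction on $k$ reproduces that fact independently and transparently.
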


In particular, if $\{Q_w\}_{w\ge0}$ is the sequence of monic orthogonal polynomials we
have a homomorphism
$$\Delta = \prod\Delta_w : \DD(W) \to {M_N}^{\NN_0}$$
of $\DD(W)$ into the direct product of $\NN_0$ copies of $M_N$. Moreover $\Delta$ is injective.

In Section 3 of \cite{GT07} the ad-conditions coming from the bispectral pairs $(L,D)$ are used
to described the image $\Delta(W)$ of $\DD(W)$ by the eigenvalue isomorphism 
$\Delta$.
Here $L$ is the difference operator associated to the three term recursion relation
satisfied by the sequence of monic orthogonal polynomials and $D \in \DD(W)$.
This gives
a completely different presentation of $\DD(W)$ and will be used frequently to simplify
several computations.
Let us mention that the ad-conditions were first introduced in \cite{DG86}.

\section{The $2\times2$ Gegenbauer Example}\label{Geg}

From now on we will consider the matrix-valued orthogonal polynomials related to the $2\times2$ weight matrix given by
\begin{equation}\label{peso}
  W(x)= W_{p,n}=(1-x^2)^{\tfrac n2 -1} \begin{pmatrix}
  p\,x^2+n-p & -nx\\ -nx & (n-p)x^2+p
\end{pmatrix},\quad x\in [-1,1],
\end{equation}
for real parameters $0<p<n/2$.

The orthogonal polynomials arising here are discussed in full in \cite{PZ15}.
As we said in the introduction, the purpose of this paper is to compute in this
case the algebra $\mathcal D(W )$ and to study its structure.
The monic orthogonal polynomials $\{Q_w\}_{w\ge0}$ with respect to the weight matrix
$W (x)$ are explicitly given by
\begin{equation}\label{monicos}
Q_w=   \frac {w!}{2^{w} \left(\tfrac{n+1}2\right)_{w}}  
\begin{pmatrix}
 C_w^{\frac{n+1}{2}}(x)+\frac{n+1}{p+w}\,C_{w-2}^{\frac{n+3}{2}}(x)&\frac{n+1}{p+w}\,C_{w-1}^{\frac{n+3}{2}}(x)\\ \mbox{} \\
\frac{n+1}{n-p+w}\,C_{w-1}^{\frac{n+3}{2}}(x)& C_w^{\frac{n+1}{2}}(x)+\frac{n+1}{n-p+w}\,C_{w-2}^{\frac{n+3}{2}}(x)
\end{pmatrix},
\end{equation}
where 
$(a)_w=a(a+1)\dots (a+w-1)$ denotes the Pochhammer's symbol and
   $C_n^\lambda(x)$ denotes the $n$-th Gegenbauer polynomial
$$C_n^\lambda(x)=\frac{(2\lambda)_n} {n!}\,
 {}_2\!F_1\left(\begin{matrix}-n,\,n+2\lambda\\ \lambda+1/2\end{matrix};\frac{1-x}{2}\right).
$$
As usual, we assume $C_n^\lambda(x)=0$ if $n<0$; recall that $C_n^\lambda$ is a polynomial of degree $n$, with leading coefficient  $\frac{2^n(\lambda)_n}{n!}$.
Recall that (see \cite[p 561]{AS65})
\begin{equation}\label{ec}
C_n^{(\alpha)}(z)=\sum_{k=0}^{\lfloor n/2\rfloor} (-1)^k\frac{\Gamma(n-k+\alpha)}{\Gamma(\alpha)k!(n-2k)!}(2z)^{n-2k}.
\end{equation}

Let us recall the concept of similarity for matrix weights.
Two weights $W$ and $\widetilde W$ are said to be {\em similar} if there exists a nonsingular matrix $M$, which does not depend on $x$, such that
$$\widetilde W(x)=M W(x)M^*, \quad \text{ for all } x\in (a,b).$$

\begin{lem}\label{neq}
 If $(p,n)\neq( \tilde p , \tilde n)$ then $W_{p,n}$ is not similar to $W_{\tilde p,\tilde n}$.
\end{lem}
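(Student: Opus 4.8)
The plan is to reduce the similarity relation to an identity of matrix polynomials and then to exploit an unexpected rigidity forced on the conjugating matrix. Suppose, contrapositively, that $W_{\tilde p,\tilde n}(x)=M\,W_{p,n}(x)\,M^*$ for all $x\in(-1,1)$ with $M$ constant and nonsingular; I will show $(\tilde p,\tilde n)=(p,n)$. First I would take determinants. A direct computation of the inner $2\times2$ determinant gives $\det W_{p,n}(x)=p(n-p)(1-x^2)^{n}$, so the similarity forces the identity $\tilde p(\tilde n-\tilde p)(1-x^2)^{\tilde n}=|\det M|^2\,p(n-p)(1-x^2)^{n}$ in $x$. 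Since $p(n-p)$, $\tilde p(\tilde n-\tilde p)$ and $|\det M|^2$ are nonzero constants, this says $(1-x^2)^{\tilde n-n}$ is constant on $(-1,1)$, whence $\tilde n=n$.

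With $n=\tilde n$ fixed, the common scalar factor $(1-x^2)^{n/2-1}$ is positive on $(-1,1)$ and cancels, leaving $A_{\tilde p}(x)=M\,A_p(x)\,M^*$, where $A_p(x)$ denotes the polynomial part of $W_{p,n}$. Writing $A_p(x)=A_0+A_1x+A_2x^2$ and matching powers of $x$ produces three simultaneous congruences. The key observation is that $A_0+A_2=n\,\mathbf I$ independently of $p$; adding the $x^0$ and $x^2$ relations therefore gives $n\,\mathbf I=M(n\,\mathbf I)M^*$, that is $MM^*=\mathbf I$, so $M$ is \emph{unitary}. Moreover $A_1=-n\left(\begin{smallmatrix}0&1\\1&0\end{smallmatrix}\right)$ is the same for every $p$, so the $x^1$ relation becomes $MA_1M^{-1}=A_1$: the matrix $M$ commutes with $\left(\begin{smallmatrix}0&1\\1&0\end{smallmatrix}\right)$ and hence has the form $M=\left(\begin{smallmatrix}a&b\\b&a\end{smallmatrix}\right)$.

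It then remains to analyze the $x^0$ relation $MA_0M^*=\tilde A_0$, where $A_0=\operatorname{diag}(n-p,\,p)$. Computing the off-diagonal entry of $MA_0M^*$ for such a unitary $M$ and invoking the unitarity condition $a\bar b+\bar a b=0$, one finds it equals $(n-2p)\,a\bar b$. Since $0<p<n/2$ gives $n-2p\neq0$, the required vanishing of this entry forces $a\bar b=0$, hence $a=0$ or $b=0$. If $b=0$ then $M$ is a unimodular scalar and $\tilde A_0=A_0$, giving $\tilde p=p$. If $a=0$ then $MA_0M^*=\operatorname{diag}(p,\,n-p)$, forcing $\tilde p=n-p$; but then $\tilde p>n/2$, contradicting the standing hypothesis $0<\tilde p<n/2$. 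Thus $\tilde p=p$, and together with $\tilde n=n$ this establishes the contrapositive.

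The main obstacle is not any individual computation but recognizing the right structural features to exploit: that the determinant isolates $n$ cleanly, and above all that the relation $A_0+A_2=n\,\mathbf I$ collapses the a priori arbitrary nonsingular $M$ to a unitary matrix. Once unitarity and the commutation with the linear coefficient are in hand, what remains is a short $2\times2$ computation in which the assumption $0<p<n/2$ enters decisively, both to guarantee $n-2p\neq0$ and to discard the reflected solution $\tilde p=n-p$.
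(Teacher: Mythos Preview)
Your argument is correct. It differs from the paper's proof in emphasis and cleanliness. The paper simply expands $MW_{p,n}M^*$ entry by entry, asserts that $n=\tilde n$ is ``clear'' (presumably from the scalar factor $(1-x^2)^{n/2-1}$), and then reads off from the $(1,2)$ entry the system
\[
a\bar c\,p+b\bar d\,(n-p)=0,\qquad a\bar c\,(n-p)+b\bar d\,p=0,
\]
whose coefficient matrix has determinant $n(2p-n)\ne 0$; this forces $a\bar c=b\bar d=0$, and together with the remaining entries and invertibility of $M$ one concludes $\tilde p=p$ (or $\tilde p=n-p$, ruled out by $0<\tilde p<n/2$). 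Your route is more structural: you first isolate $n$ via the determinant identity $\det W_{p,n}=p(n-p)(1-x^2)^n$, then use the $p$-independent facts $A_0+A_2=n\mathbf I$ and $A_1=-n\left(\begin{smallmatrix}0&1\\1&0\end{smallmatrix}\right)$ to force $M$ to be a \emph{unitary} matrix of the special shape $\left(\begin{smallmatrix}a&b\\b&a\end{smallmatrix}\right)$, after which a single off-diagonal computation finishes. Both approaches ultimately hinge on $n-2p\ne 0$ and on the range restriction to exclude $\tilde p=n-p$; yours makes clearer why the problem collapses (the polynomial part has $p$-independent trace and linear term), while the paper's is a direct computation that reaches the same two key equations without naming the structure behind them.
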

\begin{proof}
If $W_{p,n}$ is  similar to $W_{\tilde p,\tilde n}$ then there exists a nonsingular matrix
$M=\left(\begin{smallmatrix}
a&b\\c&d
\end{smallmatrix}\right)$
such that $\widetilde W=M WM^*$,
therefore it is clear that $n=\tilde n$. Hence we have that the matrix 
$\left(\begin{smallmatrix}
 \tilde p\,x^2+n-\tilde p & -nx\\ -nx & (n-\tilde p)x^2+\tilde p
\end{smallmatrix}\right)$
 is of the form
$$\left( \begin {smallmatrix}  \left(   \left| a \right| 
^{2}p+  \left| b \right|^{2}n-  \left| b
 \right|^{2}p \right) {x}^{2}+ \left( -\overline{a}bn-
\overline{b}an \right) x+  \left| a \right|^{2}n-
  \left| a \right|^{2}p+  \left| b \right| ^{2}p& \left( \overline{c}ap+ \left( bn-bp \right) 
\overline{d} \right) {x}^{2}+ \left( -bn\overline{c}-an\overline{d}
 \right) x+ \left( an-ap \right) \overline{c}+\overline{d}bp
\\ \noalign{\medskip} \left( \overline{a}cp+ \left( dn-dp \right) 
\overline{b} \right) {x}^{2}+ \left( -dn\overline{a}-cn\overline{b}
 \right) x+ \left( cn-cp \right) \overline{a}+\overline{b}dp& \left( 
  \left| c \right|^{2}p+  \left| d \right|^{2}n-  \left| d \right|^{2}p \right) {x}^{2
}+ \left( -\overline{c}dn-\overline{d}cn \right) x+  \left| c
 \right|^{2}n-  \left| c \right| ^{2}p+
  \left| d \right|^{2}p\end {smallmatrix} \right). 
$$
Then, by looking at the entry $(1,2)$ in the matrix above, we have 
\begin{align*}\left( \overline{c}ap+ \left( bn-bp \right) 
\overline{d} \right)=0& &\text{and}& & \left( an-ap \right) \overline{c}+\overline{d}bp=0,
\end{align*}
but since $M$ is invertible and $0<p<n/2$ we have that $M$ has to be the identity matrix. Hence $(p,n)=( \tilde p , \tilde n)$.
\end{proof}


%
%

\bigskip

Let us denote $$Q_w=\sum_{i=0}^w x^i B_i^w.$$
From \eqref{monicos} and \eqref{ec} we obtain

\begin{equation}\label{par}
 B^w_{w-2k}=
 \frac{w!(-1)^k 2^{-2k} }{(\frac{n+1}{2}+w-k)_k k! (w-2k)! }
\left(\begin{matrix}
\frac{p+w-2k}{p+w} &0\\
0& \frac{n-p+w-2k}{n-p+w}
\end{matrix}\right),
\end{equation}
\begin{equation}\label{impar}
 B^w_{w-2k-1}=
 \frac{w!(-1)^k 2^{-2k} }{(\frac{n+1}{2}+w-k)_k k! (w-2k-1)!}
 \left(\begin{matrix}
0& \frac{1 }{ p+w} \\
 \frac{1 }{n-p+w}&0
\end{matrix}\right).
\end{equation}

We will use some tools already employed by Tirao to simplify some of the conditions 
satisfied by the operators in $\mathcal D(W)$. We quote a result in \cite{T11} (see Proposition 3.4 and equation (25)), making some changes in the parameters.
 \begin{prop}\label{simpl}
 Let $D=\sum_{j=0}^{s}\partial F_j(x)$, with $F_j=\sum_{i=0}^{j}x^i F_i^j$. Then $D\in\mathcal D(W)$ if and only if 
\begin{equation*}
 \sum_{i=0}^{s} B_{w-m+i}^w \left(\sum_{\ell=0}^{s} [w-m+i]_{\ell} F_{\ell-i}^\ell \right)-\left(\sum_{\ell=0}^{s}[w]_\ell F_\ell^\ell \right)B_{w-m}^w=0
\end{equation*}
for all $0\le m\le w$, $0\le w$.
\end{prop}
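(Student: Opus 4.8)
The plan is to prove Proposition~\ref{simpl} by unwinding the eigenvalue equation $Q_wD=\Lambda_w Q_w$ into coefficient identities, matching powers of $x$ on both sides. Since $D\in\mathcal D(W)$ is characterized by~\eqref{auto}, which by Proposition~\ref{22} gives $\Lambda_w=\sum_{\ell=0}^s[w]_\ell F_\ell^\ell$, the whole statement is really an explicit rewriting of $Q_wD-\Lambda_w Q_w=0$ in the monic basis $Q_w=\sum_i x^i B_i^w$. The first step is to compute the action $Q_wD$ directly from the definition $PD=\sum_\ell \partial^\ell(P)F_\ell$. Applying $\partial^\ell$ to $x^i$ produces $[i]_\ell\,x^{i-\ell}$, and then multiplying by $F_\ell(x)=\sum_{t=0}^\ell x^t F_t^\ell$ shifts the power of $x$ back up by $t$. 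Thus each term contributes a monomial $x^{i-\ell+t}$ with coefficient $[i]_\ell\,B_i^w F_t^\ell$.

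The second step is to collect, for a fixed target power $x^k$, all contributions on each side and set them equal. Writing $F_t^\ell=F_{\ell-(\ell-t)}^\ell$ and reindexing the inner sum so that the summation variable runs over the derivative order $\ell$ while the index shift $i\mapsto i+(\ell-t)$ is absorbed, one wants to reach exactly the form in the statement, where the inner coefficient of $B_{w-m+i}^w$ is $\sum_\ell [w-m+i]_\ell F_{\ell-i}^\ell$. The cleanest way I would organize this is to substitute $k=w-m$ (so $m$ measures the codegree, i.e.\ how far $k$ is below the top degree $w$) and let $i$ index the relevant band of coefficients $B_{w-m+i}^w$ that can feed into the $x^{w-m}$ term after differentiation and multiplication. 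The right-hand term $\Lambda_w B_{w-m}^w=\left(\sum_\ell[w]_\ell F_\ell^\ell\right)B_{w-m}^w$ is simply the $x^{w-m}$ coefficient of $\Lambda_w Q_w$, since $\Lambda_w$ is a constant matrix.

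The third step is purely bookkeeping: verify that the index substitution is a genuine bijection on the summation range, that the factors $[\,\cdot\,]_\ell$ land on the claimed arguments $[w-m+i]_\ell$ and $[w]_\ell$, and that the degree constraints ($\deg F_\ell\le\ell$ from Proposition~\ref{Ddet}, so $F_t^\ell=0$ for $t>\ell$, equivalently $F_{\ell-i}^\ell=0$ for $i>\ell$) make the finite sums match up without spurious terms. One also records that $B_j^w=0$ for $j<0$ or $j>w$, which controls the endpoints of the band and justifies running the outer index $i$ over $0\le i\le s$. Conversely, since the monomials $\{x^k\}$ are linearly independent over $M_N$, the collection of these coefficient equations for all $0\le m\le w$ is \emph{equivalent} to $Q_wD=\Lambda_w Q_w$, which gives the ``if and only if'' once we know (by Proposition~\ref{22}) that any $D$ satisfying~\eqref{auto} forces $\Lambda_w$ to be exactly $\sum_\ell[w]_\ell F_\ell^\ell$.

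The main obstacle is the third step, the reindexing: keeping the three interacting indices (derivative order $\ell$, coefficient position $i$, and polynomial-coefficient shift inside $F_\ell$) consistent so that the falling factorials and the subscripts on $B$ and $F$ come out precisely as stated. The algebra is routine but error-prone, and the subtlety is confirming that the stated range $0\le i\le s$ together with the conventions $B_j^w=0$ outside $0\le j\le w$ and $F_{\ell-i}^\ell=0$ for $i>\ell$ captures all nonzero contributions and no extra ones. Everything else is a direct translation of the definitions, and the equivalence with membership in $\mathcal D(W)$ follows from Propositions~\ref{Ddet} and~\ref{22} already established above.
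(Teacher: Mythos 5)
Your proposal is correct. Expanding $Q_wD=\sum_{\ell,i,t}[i]_\ell\,x^{i-\ell+t}B_i^wF_t^\ell$ and reading off the coefficient of $x^{w-m}$ (with the conventions $B_j^w=0$ for $j\notin[0,w]$ and $F_{\ell-i}^\ell=0$ for $i>\ell$ closing the index ranges) gives exactly $\sum_{i=0}^{s} B_{w-m+i}^w \bigl(\sum_{\ell=0}^{s} [w-m+i]_{\ell} F_{\ell-i}^\ell \bigr)$, and equating it with the coefficient $\bigl(\sum_{\ell}[w]_\ell F_\ell^\ell\bigr)B_{w-m}^w$ of $\Lambda_wQ_w$ — invoking Proposition~\ref{22} for the ``only if'' direction and the definition \eqref{D(W)} for the ``if'' direction, as you do — yields the stated equivalence. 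Note that the paper offers no proof of Proposition~\ref{simpl} at all: it is quoted from \cite{T11} (Proposition 3.4 and equation (25) there, with a change of parameters), and your coefficient-matching argument is precisely the one underlying that cited result, so there is no divergence to report.
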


The following lemma states a nice property of the weight $W$ that can be easily proved.
\begin{lem} Given 
$$T=\begin{pmatrix}
    1&0\\0&-1
   \end{pmatrix}
$$
we have that 
$$ W(-x)=TW(x)T.$$
\end{lem}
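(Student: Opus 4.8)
The plan is to verify the identity by direct substitution, exploiting the parity structure of the entries of $W$. First I would observe that $W(x)$ factors as the scalar function $(1-x^2)^{\frac n2-1}$ times a matrix polynomial $P(x)=\left(\begin{smallmatrix} px^2+n-p & -nx \\ -nx & (n-p)x^2+p \end{smallmatrix}\right)$, and that the scalar factor is manifestly invariant under $x\mapsto -x$, since it depends on $x$ only through $x^2$. Thus it suffices to check that $P(-x)=TP(x)T$.

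Next I would record the elementary effect of conjugation by $T=\operatorname{diag}(1,-1)$: for any $2\times 2$ matrix one has $T\left(\begin{smallmatrix} a & b \\ c & d \end{smallmatrix}\right)T=\left(\begin{smallmatrix} a & -b \\ -c & d \end{smallmatrix}\right)$, i.e. conjugation by $T$ fixes the diagonal entries and negates the off-diagonal ones. On the other hand, replacing $x$ by $-x$ in $P$ leaves the two diagonal entries unchanged (they are even, being polynomials in $x^2$) and negates the two off-diagonal entries $-nx$ (which are odd in $x$). Since these two operations act identically on each of the four entries, they produce the same matrix, which gives $P(-x)=TP(x)T$ and hence $W(-x)=TW(x)T$.

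There is essentially no obstacle here: the statement reduces to matching the two parities entrywise, so once the scalar factor is set aside and the conjugation rule for $T$ is noted, the identity follows by inspection. The only point worth recording explicitly is that $T^2=\mathbf I$, so that $T=T^{-1}=T^{*}$ and the map $X\mapsto TXT$ is a genuine involutive similarity; this is what makes the relation $W(-x)=TW(x)T$ useful as a symmetry of the weight in the later arguments.
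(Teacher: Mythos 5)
Your proof is correct and is exactly the direct verification the paper has in mind: the paper states this lemma without proof, remarking only that it ``can be easily proved,'' and your entrywise parity check (even diagonal, odd off-diagonal, matched by conjugation with $T=\operatorname{diag}(1,-1)$) is precisely that easy argument. The closing observation that $T^2=\mathbf I$ is a worthwhile addition, since it is what makes $D\mapsto\widetilde D$ an involution later in the paper.
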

\begin{defn}
 Given $D=\sum_{0\le i\le s}\partial^i F_i \in \mathcal D$ let 
$$
\widetilde D=\sum_{0\le i\le s}\partial^i (-1)^i T \widetilde{F_i} T \in \mathcal D,
$$
where $\widetilde {F_i}(x)={F_i}(-x)$.
\end{defn}

%

\begin{prop}[\cite{T11}, Proposition 3.9]\label{L}
If $D\in\mathcal D(W)$ then $\widetilde D\in\mathcal D(W)$. Moreover, if $\{Q_w\}_{w\ge0}$ is the sequence of monic orthogonal polynomials with respect to $W$ and $\{\Lambda_w(D)\}_{w\ge0}$ is the corresponding eigenvalue sequence, then
$$
\Lambda_w\left(\widetilde D\right)=T\Lambda_w(D) T.
$$
\end{prop}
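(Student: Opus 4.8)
The plan is to reduce the statement to two ingredients: a parity symmetry of the monic orthogonal polynomials themselves, and a conjugation identity for the operation $D\mapsto\widetilde D$. Throughout I use that $T$ is real and diagonal, so $T^*=T$ and $T^2=\mathbf I$.

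First I would establish the symmetry of the polynomials. Put $P_w(x)=(-1)^wTQ_w(-x)T$. Since $Q_w$ is monic of degree $w$, its leading term $x^w\mathbf I$ gives $P_w$ the leading term $(-1)^w(-1)^w\mathbf I\,x^w=\mathbf I\,x^w$, so $P_w$ is again monic of degree $w$. To check orthogonality, I would expand
\[
(P_{w'},P_w)=(-1)^{w+w'}\int_{-1}^{1}TQ_{w'}(-x)\,[\,TW(x)T\,]\,Q_w(-x)^*T\,dx,
\]
apply the preceding lemma in the form $TW(x)T=W(-x)$, and substitute $y=-x$ (the reversal of the limits is compensated by $dx=-dy$). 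This collapses the integral to $(-1)^{w+w'}T(Q_{w'},Q_w)T$, which vanishes for $w'\neq w$. By uniqueness of the monic orthogonal sequence, $P_w=Q_w$; equivalently, writing $\widehat P(x):=TP(-x)T$, one obtains $\widehat{Q_w}=(-1)^wQ_w$ and $Q_w(-x)=(-1)^wTQ_w(x)T$.

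Second I would prove the conjugation identity: for every $P\in M_N[x]$,
\[
\widehat P\,\widetilde D(x)=T\,(PD)(-x)\,T .
\]
This is a direct computation. The chain rule gives $\partial^i(\widehat P)(x)=(-1)^i\,T(\partial^iP)(-x)T$, and by definition the coefficient of $\partial^i$ in $\widetilde D$ is $(-1)^iTF_i(-x)T$. Substituting both into $\widehat P\,\widetilde D(x)=\sum_i\partial^i(\widehat P)(x)\cdot(-1)^iTF_i(-x)T$, the two factors $(-1)^i$ multiply to $1$ and the inner pair $T^2=\mathbf I$ cancels, leaving $T\big[\sum_i(\partial^iP)(-x)F_i(-x)\big]T=T(PD)(-x)T$.

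Finally I would combine the two. Taking $P=Q_w$ in the identity and using $\widehat{Q_w}=(-1)^wQ_w$ on the left, together with $Q_wD=\Lambda_w(D)Q_w$ and $Q_w(-x)=(-1)^wTQ_w(x)T$ on the right, gives
\[
(-1)^wQ_w\widetilde D=T\,\Lambda_w(D)\,Q_w(-x)\,T=(-1)^w\,T\Lambda_w(D)T\,Q_w .
\]
Cancelling $(-1)^w$ yields $Q_w\widetilde D=T\Lambda_w(D)T\,Q_w$, which shows simultaneously that $\widetilde D\in\mathcal D(W)$ and that $\Lambda_w(\widetilde D)=T\Lambda_w(D)T$. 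I expect the only delicate point to be the first step: keeping track of the conjugate transpose through $T$ and arranging the change of variables so that the weight symmetry $W(-x)=TW(x)T$ reproduces exactly $(Q_{w'},Q_w)$. The remaining steps are bookkeeping with signs and the involution $T^2=\mathbf I$.
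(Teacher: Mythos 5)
Your proof is correct. Note that the paper itself gives no proof of this proposition: it is quoted verbatim from \cite{T11} (Proposition 3.9 there), with only the ingredient $W(-x)=TW(x)T$ established beforehand as a lemma. What you have written is a complete, self-contained reconstruction of the standard argument: the uniqueness of the monic orthogonal sequence combined with the weight symmetry yields $Q_w(-x)=(-1)^w\,T\,Q_w(x)\,T$, the conjugation identity $\widehat{P}\,\widetilde D(x)=T\,(PD)(-x)\,T$ follows from the chain rule and $T^2=\mathbf I$, and the two together give $Q_w\widetilde D=T\Lambda_w(D)T\,Q_w$ in one line. All the sign and transpose bookkeeping checks out (in particular $P_w(x)^*=(-1)^wTQ_w(-x)^*T$ and the change of variables $y=-x$ over the symmetric interval), so this fills in exactly the proof the paper delegates to the reference.
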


\begin{thm}[\cite{T11}, Theorem 3.10]
  The map $D\to \widetilde D$ defines an involutive automorphism of the algebra 
  $\mathcal D(W)$.
\end{thm}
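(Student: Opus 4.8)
The plan is to check that the $\CC$-linear map $D\mapsto\widetilde D$ is a well-defined, unital, multiplicative bijection of $\mathcal D(W)$ that squares to the identity. That $\widetilde D\in\mathcal D(W)$ whenever $D\in\mathcal D(W)$ is exactly Proposition~\ref{L}, and $\CC$-linearity is immediate from the definition, since $F_i\mapsto(-1)^iT\widetilde{F_i}T$ is $\CC$-linear in each coefficient. So three things remain: the map is involutive, it preserves products, and it fixes $\mathbf I$; bijectivity is then automatic from involutivity.

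First I would dispose of the easy points by direct bookkeeping. Writing $\widetilde D=\sum_{i=0}^s\partial^i G_i$ with $G_i=(-1)^iT\widetilde{F_i}T$, one has $\widetilde{G_i}(x)=G_i(-x)=(-1)^iT\widetilde{F_i}(-x)T=(-1)^iT F_i(x)T$, because $\widetilde{F_i}(-x)=F_i(x)$. Hence the $i$-th coefficient of $\widetilde{\widetilde D}$ equals $(-1)^iT\widetilde{G_i}T=(-1)^{2i}T^2F_iT^2=F_i$, using $T^2=\mathbf I$, so $\widetilde{\widetilde D}=D$. The same computation applied to $\mathbf I=\partial^0\mathbf I$ gives $\widetilde{\mathbf I}=T\mathbf I T=\mathbf I$.

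The substantive step is multiplicativity, and here I would resist composing the two operators directly: that route forces a Leibniz expansion of products of the matrix coefficients together with all the chain-rule signs coming from $x\mapsto -x$. Instead I would pass to the eigenvalue representation. Each $\Lambda_w$ is an algebra homomorphism $\mathcal D(W)\to M_N$, and the family $\{\Lambda_w\}_{w\ge0}$ separates the elements of $\mathcal D(W)$ (this is the injectivity of $\Delta$). So it suffices to match eigenvalues. Using Proposition~\ref{L} together with $T^2=\mathbf I$, for $D_1,D_2\in\mathcal D(W)$ one gets
\begin{align*}
\Lambda_w(\widetilde{D_1D_2})&=T\Lambda_w(D_1D_2)T=T\Lambda_w(D_1)\Lambda_w(D_2)T\\
&=\big(T\Lambda_w(D_1)T\big)\big(T\Lambda_w(D_2)T\big)=\Lambda_w(\widetilde{D_1})\Lambda_w(\widetilde{D_2})=\Lambda_w(\widetilde{D_1}\,\widetilde{D_2}).
\end{align*}
Since this holds for every $w\ge0$ and the representations separate points, $\widetilde{D_1D_2}=\widetilde{D_1}\,\widetilde{D_2}$.

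I expect multiplicativity to be the only genuine obstacle, and the key decision is to route it through the injective eigenvalue homomorphism rather than through operator composition. As a structural remark worth recording, the whole statement can also be seen as conjugation: if $\rho$ denotes the map on $M_N[x]$ given by $(P\rho)(x)=P(-x)T$, then $\rho^2=\mathrm{id}$ and a short chain-rule computation shows $\widetilde D=\rho D\rho$ as right operators, which exhibits $D\mapsto\widetilde D$ at once as an involutive automorphism; the eigenvalue argument, however, is the shortest given the tools already at hand.
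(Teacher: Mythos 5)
Your proof is correct; note, however, that the paper itself does not prove this statement at all --- it is quoted from \cite{T11} (Theorem 3.10 there), so there is no in-paper argument to compare yours against. Taken on its own merits, your argument is sound and uses only tools the paper has already put on the table: well-definedness is exactly Proposition \ref{L}; linearity, unitality and involutivity follow from the coefficient computation with $T^2=\mathbf I$; and the one substantive point, multiplicativity, is correctly routed through the eigenvalue maps. Two details make that route work, and both hold here. First, $\Lambda_w$ is genuinely multiplicative rather than anti-multiplicative, precisely because the operators act on the right while the eigenvalues sit on the left: $Q_w(DD')=(Q_wD)D'=\Lambda_w(D)\,Q_wD'=\Lambda_w(D)\Lambda_w(D')\,Q_w$. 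Second, the separation property of $\{\Lambda_w\}_{w\ge0}$ applies only to elements of $\mathcal D(W)$, and both $\widetilde{DD'}$ and $\widetilde{D}\,\widetilde{D'}$ do lie in $\mathcal D(W)$ --- the former by Proposition \ref{L} applied to $DD'$, the latter because $\mathcal D(W)$ is an algebra and each factor is in it by Proposition \ref{L}; you use this implicitly and it is worth stating. Your closing structural remark is also correct: with $(P\rho)(x)=P(-x)T$ one checks $\rho^2=\mathrm{id}$ and $\widetilde D=\rho D\rho$ as right operators (the chain rule produces exactly the sign $(-1)^i$, and the two copies of $T$ land on the correct sides), so $D\mapsto\widetilde D$ is conjugation by an involution in the algebra of all right operators on $M_N[x]$; combined with Proposition \ref{L} this gives involutivity and multiplicativity simultaneously, and is arguably the cleanest packaging of the whole theorem.
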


We will need the following lemma from \cite{PZ15}.
\begin{lem}[\cite{PZ15}, Corollary 5.3]\label{PZ}
The set of differential operators of order at most two in $\mathcal D(W)$ is a five-dimensional vector space generated by the identity $\mathbf{I}$ and 
\begin{align*}
  D_1&= \partial ^2 \begin{pmatrix}  x^2& x\\-x&-1 \end{pmatrix}
     + \partial \begin{pmatrix} (n+2)x&n-p+2 \\-p &0\end{pmatrix}
     +\begin{pmatrix} p\,(n-p+1)&0\\0& 0  \end{pmatrix},\\ \mbox{} \displaybreak[0]\\
  D_2&=\partial^2\begin{pmatrix}  -1& -x\\x&x^2 \end{pmatrix}
     + \partial \begin{pmatrix} 0&p-n\\p+2 &(n+2)x\end{pmatrix}
    +\begin{pmatrix}  0&0\\0& (p+1)(n-p) \end{pmatrix}  ,\\ \mbox{} \displaybreak[0]\\
  D_3&=\partial^2 \begin{pmatrix}  -x& -1\\ x^2&x \end{pmatrix}
     +\partial \begin{pmatrix} -p& 0\\2(p+1)x &p+2\end{pmatrix}
    +\begin{pmatrix}  0&0\\p(p+1) & 0 \end{pmatrix} ,\\ \mbox{} \displaybreak[0]\\
  D_4&=\partial^2\begin{pmatrix}  x&x^2 \\-1&-x \end{pmatrix}
     + \partial \begin{pmatrix} n-p+2 & 2(n-p+1) x\\0 &p-n\end{pmatrix}
    +\begin{pmatrix}  0&  (n-p)(n-p+1)\\0& 0 \end{pmatrix}.
\end{align*}
The corresponding eigenvalues are given by
\begin{equation}\label{eigen}
\begin{aligned}
  \Lambda_w(D_1)&=
  \left(\begin{smallmatrix}
  (w+p)(w+n-p+1) & 0\\ 0& 0
\end{smallmatrix}\right), \, 
&&\Lambda_w(D_2) = \left(\begin{smallmatrix}
 0 & 0\\ 0& (w+p+1)(w+n-p)
\end{smallmatrix}\right) ,\\
\Lambda_w(D_3)& = \left(\begin{smallmatrix}
 0 & 0\\ (w+p)(w+p+1) & 0
\end{smallmatrix}\right), \, 
&&\Lambda_w(D_4) = \left(\begin{smallmatrix}
 0 & (w+n-p)(w+n-p+1)\\ 0& 0
\end{smallmatrix}\right).
\end{aligned}
\end{equation}
\end{lem}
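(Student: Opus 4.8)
The plan is to intersect $\mathcal D(W)$ with the finite-dimensional space of operators of order at most two and to solve the resulting linear system explicitly. By Proposition~\ref{Ddet}, any $D$ of order $\le 2$ in $\mathcal D(W)$ has the form $D=\partial^2 F_2+\partial F_1+F_0$ with $\deg F_i\le i$, so writing $F_2=x^2F_2^2+xF_1^2+F_0^2$, $F_1=xF_1^1+F_0^1$ and $F_0=F_0^0$ we are left with six unknown matrices in $M_2$, i.e.\ $24$ complex parameters. The goal is to show that the membership conditions for $\mathcal D(W)$ cut this $24$-dimensional space down to exactly five dimensions, spanned by $\mathbf I,D_1,D_2,D_3,D_4$.

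The first step is to impose Proposition~\ref{simpl} with $s=2$, substituting the explicit coefficients \eqref{par} and \eqref{impar} of the monic polynomials. The crucial simplification is the parity structure: $B^w_{w-2k}$ is diagonal while $B^w_{w-2k-1}$ is antidiagonal, with entries that are explicit rational functions of $w$. Consequently, for each fixed $m$ the equation of Proposition~\ref{simpl} becomes a $2\times 2$ matrix identity whose entries are rational in $w$; clearing denominators turns ``vanishing for all $w\ge m$'' into the vanishing of a polynomial in $w$, and equating its coefficients yields finitely many linear relations among the $24$ parameters. Since all these polynomials have bounded degree in $w$, the full collection of conditions over all $m\ge 0$ reduces to a finite linear system.

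To keep the bookkeeping manageable I would exploit the involution $D\mapsto\widetilde D$ of Proposition~\ref{L}. As $T=\operatorname{diag}(1,-1)$, conjugation by $T$ fixes diagonal matrices and negates antidiagonal ones, so the $+1$-eigenspace of the involution consists of operators with diagonal eigenvalue matrices and the $-1$-eigenspace of those with antidiagonal ones. The system therefore decouples into two independent smaller ones, and one checks that the symmetric part is three-dimensional (spanned by $\mathbf I, D_1, D_2$) while the antisymmetric part is two-dimensional (spanned by $D_3,D_4$). Linear independence is then immediate from the distinct eigenvalue patterns in \eqref{eigen} together with the injectivity of $\Delta$.

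Finally, the eigenvalues follow with no extra effort from Proposition~\ref{22}: reading off $F_2^2,F_1^1,F_0^0$ from each $D_i$ and forming $\Lambda_w=F_0^0+wF_1^1+w(w-1)F_2^2$ gives, for instance, $\Lambda_w(D_1)$ with top-left entry $p(n-p+1)+w(n+2)+w(w-1)=w^2+(n+1)w+p(n-p+1)=(w+p)(w+n-p+1)$ and zeros elsewhere, matching \eqref{eigen}; the remaining three are identical in spirit. The main obstacle is purely computational: organizing the polynomial identities in $w$ coming from Proposition~\ref{simpl} so that the rank of the linear system is provably $19$, equivalently that the solution space is exactly five-dimensional. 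The parity splitting and the involution are precisely what make this rank computation tractable by hand.
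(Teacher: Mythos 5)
First, a point of comparison you could not have known: the paper contains no proof of this statement at all. Lemma \ref{PZ} is imported verbatim from \cite{PZ15} (Corollary 5.3 there), where the operators $D_1,\dots,D_4$ are constructed and verified; in the present paper it functions purely as an external input. Your attempt therefore has to stand on its own, and as it stands it has a genuine gap. The frame is sound: Proposition \ref{Ddet} does reduce the problem to $24$ parameters, Proposition \ref{simpl} is an equivalence so imposing it does characterize membership, the parity structure of \eqref{par}--\eqref{impar} together with the involution of Proposition \ref{L} does split the problem (the paper itself exploits exactly this splitting in Section \ref{Ord}), and your eigenvalue computation via Proposition \ref{22} is correct. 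But everything that makes the lemma true is concentrated in the two assertions you defer with ``one checks'' and ``the main obstacle is purely computational'': (i) that $\mathbf I,D_1,\dots,D_4$ actually satisfy the conditions of Proposition \ref{simpl} --- note that membership in $\mathcal D(W)$, i.e.\ \eqref{auto}, is a hypothesis of Proposition \ref{22}, so even your eigenvalue formulas are conditional on (i) --- and (ii) that the linear system has rank exactly $19$, so that nothing beyond these five operators survives. Neither half is executed, and those two halves are the entire content of the result. A strategy whose decisive step is declared tractable but never carried out is not yet a proof.

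Second, one step of your reduction is not merely unexecuted but insufficiently justified as stated. You argue that the conditions of Proposition \ref{simpl} collapse to a finite linear system because, for each fixed $m$, the entries are polynomials of bounded degree in $w$. That alone does not suffice: $m$ also ranges over all of $\NN_0$, so a priori you obtain infinitely many families of polynomial identities, one per $m$. What makes the reduction work is that the dependence on $m$ is itself polynomial after a suitable change of variables; this is precisely why the paper, when it runs this type of argument in Section \ref{Ord}, substitutes $w=\widetilde m+2k$ and analyzes a single polynomial identity in the two variables $\widetilde m$ and $k$ (see \eqref{pol}). Your sketch needs that device, or an equivalent one, before the rank computation can even be formulated as a finite problem.
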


It can be easily checked that 
\begin{align*}
\widetilde D_1 =D_1,
&&
\widetilde D_2 =D_2,
&&
\widetilde D_3 =-D_3
&&
\widetilde D_4 =-D_4,
\end{align*}
It is worthwhile  paying attention to the expression of the matrix eigenvalues in Lemma \ref{PZ}. They will be used in some proofs and they imply the following additional remark.
\begin{remark}\label{rem} 
We have the following relations among the differential operators $D_1, D_2,D_3,D_4$.
\begin{align*}
D_1D_2 =0,&&      D_1D_3&=0,  && D_1D_4= D_4D_2-(n-2p)D_4,  \\
D_2D_1=0,&&  D_2D_3&=D_3D_1+(n-2p)D_3,&& D_2D_4=0,\\
D_3D_2=0,&&  D_3D_3&=0, && D_3D_4= D_2^2-(n-2p)D_2,\\
D_4D_1=0,&&  D_4D_3&=D_1^2+(n-2p)D_1, && D_4D_4=0.
\end{align*}
\end{remark}
\section{The Order of the Operators in $\mathcal D(W)$}\label{Ord}

In this section we prove that any operator 
$$D=\sum_{\ell=0}^s \partial^\ell F_\ell(x)\in\mathcal D(W)$$
is of even order and either $F_s={\bf0}$ or $\operatorname{deg}(F_s)=s$. Furthermore, $\{D_1,D_2,D_3,D_4\}$ generates the algebra $\mathcal D(W)$.
We start by considering operators $D$ such that $\widetilde D=D$, later we prove that the same results hold for all operators.

From \eqref{par} and \eqref{impar} we have
\begin{align}
\left(B_{w-2(k-i)}^w\right)_{11}&=
\frac{p+w-2k+2i}{p+w-2k} 
\frac{(-1)^i 2^{2i}\left(\frac{n+1}{2}+w-k\right)_i [k]_i}{ [w-2k+2i]_{2i}}
\left(B_{w-2k}^w\right)_{11}\label{par2},
\\
\left(B_{w-2k-1}^w\right)_{12}&=
\frac{w-2k}{p+w-2k}
\left(B_{w-2k}^w\right)_{11}\label{impar2}.
\end{align}

 Let us assume that $\widetilde D=D$ and that $s=2r+1$, $r\in\mathbb N$. If, for $0\le \ell \le s$, we denote $F_\ell=\sum_{i=0}^\ell x^i F_i^\ell$ then from our hypotheses we have
  $$
 F_{\ell-2i}^\ell=
 \begin{pmatrix}
    u_{\ell-2i}^\ell&0
    \\
    0&v_{\ell-2i}^\ell
   \end{pmatrix}  ,
   \qquad
 F_{\ell-2i+1}^\ell=
 \begin{pmatrix}
    0&y_{\ell-2i+1}^\ell
    \\
    z_{\ell-2i+1}^\ell&0
   \end{pmatrix}  .
 $$
 
 If we put $m=2k$, Proposition \ref{simpl} gives us that 
 \begin{multline*}
 \sum_{i=0}^{r} B_{w-2k+2i}^w \left(\sum_{\ell=0}^{s} [w-2k+2i]_{\ell} F_{\ell-2i}^\ell \right)+
 \sum_{i=1}^{r+1} B_{w-2k+2i-1}^w \left(\sum_{\ell=0}^{s} [w-2k+2i-1]_{\ell} F_{\ell-2i-1}^\ell \right)
 \\
 -\left(\sum_{\ell=0}^{s}[w]_\ell F_\ell^\ell \right)B_{w-2k}^w=0
\end{multline*}
for $0\le 2k\le w$.
 
 Then, looking at the entry $(1,1)$  and using \eqref{par2} and \eqref{impar2} we have 
 \begin{align*}
  \sum_{i=0}^r 
\frac{p+w-2k+2i}{p+w-2k} 
\frac{(-1)^i 2^{2i}\left(\frac{n+1}{2}+w-k\right)_i [k]_i}{ [w-2k+2i]_{2i}}
\left(B_{w-2k}^w\right)_{11}\;
\left(
\sum_{\ell=2i}^s [w-2k+2i]_{\ell}\; u_{\ell-2i}^\ell
\right)&
\\
+\quad
  \sum_{i=1}^{r+1} 
\frac{w-2k+2i}{p+w-2k} 
\frac{(-1)^i 2^{2i}\left(\frac{n+1}{2}+w-k\right)_i [k]_i}{ [w-2k+2i]_{2i}}
\left(B_{w-2k}^w\right)_{11}\;
\left(
\sum_{\ell=2i-1}^s [w-2k+2i-1]_{\ell}\; z_{\ell-2i+1}^\ell
\right)&
\\
-
\sum_{\ell=0}^s [w]_\ell\; u_\ell^\ell 
\left(B_{w-2k}^w\right)_{11}&
=0.
 \end{align*}
for  $0\le 2k\le w$.
Hence
 \begin{equation*}
  \begin{aligned}
  \sum_{i=0}^r 
{(p+w-2k+2i)(-1)^i 2^{2i}\left(\tfrac{n+1}{2}+w-k\right)_i [k]_i}
\left(
\sum_{\ell=2i}^s [w-2k]_{\ell-2i}\; u_{\ell-2i}^\ell
\right)
&
\\
+
  \sum_{i=1}^{r+1} 
{(-1)^i 2^{2i}\left(\tfrac{n+1}{2}+w-k\right)_i [k]_i}
\left(
\sum_{\ell=2i-1}^s [w-2k]_{\ell-2i+1}\; z_{\ell-2i+1}^\ell
\right)&
-
\sum_{\ell=0}^s [w]_\ell\; u_\ell^\ell 
(p+w-2k)
=0.
 \end{aligned}
 \end{equation*}
 for  $0\le 2k\le w$.
 
Let us consider $w=\widetilde m+2k $, therefore for any $\widetilde m$ and $k$ in $\mathbb N_0$ we have
 \begin{equation}\label{pol}
  \begin{aligned}
  \sum_{i=0}^r 
{(-1)^i 2^{2i}\left(\tfrac{n+1}{2}+{\widetilde m}+k\right)_i [k]_i}
(p+{\widetilde m}+2i)
\sum_{\ell=2i}^s [{\widetilde m}]_{\ell-2i}\; u_{\ell-2i}^\ell&
\\
+
  \sum_{i=1}^{r+1} 
{(-1)^i 2^{2i}\left(\tfrac{n+1}{2}+{\widetilde m}+k\right)_i [k]_i}
\sum_{\ell=2i-1}^s [{\widetilde m}]_{\ell-2i+1}\; z_{\ell-2i+1}^\ell
&
-
(p+{\widetilde m})
\sum_{\ell=0}^s [{\widetilde m}+2k]_\ell\; u_\ell^\ell 
=0.
 \end{aligned}
 \end{equation}
If we consider \eqref{pol} as a polynomial in $k$ then we obtain  \begin{equation}\label{uz0}
z_0^s=0=u_s^s,
\end{equation}
by looking at the terms of degree $s+1$ and $s$.

We observe that \eqref{pol} is a polynomial equation in two variables $\widetilde m$ and $k$ and its term of highest total degree gives us that the following equation holds.

   \begin{align}\label{rep}
     \sum_{i=1}^{r} 
(-1)^i 2^{2i}({\widetilde m}+k)^i k^i
{\widetilde m}^{s-2i+1}\;
\left( u_{s-2i}^s + z_{s-2i+1}^s\right)
=0,\end{align}
thus
\begin{align}
   \nonumber
  \sum_{i=1}^{r} 
    \sum_{j=0}^{i}
     \binom {i} {j}
(-1)^i 2^{2i}({\widetilde m})^{i-j}k^j k^i
{\widetilde m}^{s-2i+1}\;
\left( u_{s-2i}^s + z_{s-2i+1}^s\right)
=0,\end{align}
hence
\begin{align}\nonumber
  \sum_{i=1}^{r} 
    \sum_{j=0}^{i}
     \binom {i} {j}
(-1)^i 2^{2i}k^{j+i}
{\widetilde m}^{s-i-j+1}\;
\left( u_{s-2i}^s + z_{s-2i+1}^s\right)
=0.
 \end{align}
 
 Then, for any  $1\le q\le r$ we observe that the term of degree $q$ in the variable $k$ is 
 
 \begin{equation*}
  \sum_{\frac q2\le i\le q} 
         \binom {i} {q-i}
(-1)^i 2^{2i}k^{q}
{\widetilde m}^{s-q+1}\;
\left( u_{s-2i}^s + z_{s-2i+1}^s\right)
=0.
 \end{equation*}
Hence, for any $1\le q\le r$ we have
\begin{equation}\label{pol2}
  \begin{aligned}
    \sum_{\frac q2\le i\le q}
     \binom {i} {q-i}
(-1)^i 2^{2i}
\left( u_{s-2i}^s + z_{s-2i+1}^s\right)
=0.\\
 \end{aligned}
 \end{equation}
By letting $q=1$ in \eqref{pol2} we have $u_{s-2}^s + z_{s-1}^s=0$. Then, by letting $q=2$ also in \eqref{pol2}  we have $u_{s-4}^s + z_{s-3}^s=0$. Inductively we have 
\begin{equation}\label{uz}
 u_{s-2i}^s =- z_{s-2i+1}^s, \text{ for  } 1\le i \le r.
\end{equation}
In a completely analogous way one finds that
\begin{equation}\label{vy0}
 y_0^s=0=v_s^s
\end{equation}
 and that
\begin{equation}\label{vy}
 v_{s-2i}^s =- y_{s-2i+1}^s, \text{ for  } 1\le i \le r.
\end{equation}
 On the other hand, from Proposition \ref{L}, we know that $\Lambda_w(D)$ is a diagonal matrix for any $w$, thus $D$ commutes with $D_1$. By looking at the highest order derivative in the equation $DD_1=D_1D$  we obtain
 $$
 \begin{pmatrix}  x^2& x\\-x&-1 \end{pmatrix}
 F_s
 =
 F_s
 \begin{pmatrix}  x^2& x\\-x&-1 \end{pmatrix}
 ,
 $$
i.e.
\begin{align*}
 \begin{pmatrix}  x^2& x\\-x&-1 \end{pmatrix}
\sum_{k=0}^{s}x^k F_k^s
 =&
 \sum_{k=0}^{s}x^k F_k^s
 \begin{pmatrix}  x^2& x\\-x&-1 \end{pmatrix},
 \end{align*}
then, for any $0\le k \le s$, we have 
\begin{equation}\label{k}
 \left(\begin{smallmatrix}  0&0\\0&-1\end{smallmatrix}\right)
 F_k^s
 + 
 \left(\begin{smallmatrix}  0&1\\-1&0 \end{smallmatrix}\right)
 F_{k-1}^s
 +
 \left(\begin{smallmatrix}  1&0\\0&0 \end{smallmatrix}\right)
 F_{k-2}^s
 =
 F_k^s
 \left(\begin{smallmatrix}  0&0\\0&-1\end{smallmatrix}\right)
 + 
 F_{k-1}^s
 \left(\begin{smallmatrix}  0&1\\-1&0 \end{smallmatrix}\right)
 +
 F_{k-2}^s
\left(\begin{smallmatrix}  1&0\\0&0 \end{smallmatrix}\right).
 \end{equation}
If $k$ and $s$ are of the same parity, say $k=s-2i+2$, then  \eqref{k} becomes
$$
 \left(\begin{smallmatrix}  0&1\\-1&0 \end{smallmatrix}\right)
 F_{s-2i+1}^s
 =
 F_{s-2i+1}^s
 \left(\begin{smallmatrix}  0&1\\-1&0 \end{smallmatrix}\right)
  $$
whence, looking at the entry $(1,1)$, we obtain 
\begin{equation}\label{zy}
 z_{s-2i+1}^s =- y_{s-2i+1}^s, \text{ for  } 1\le i \le r+1.
\end{equation}

If $k$ and $s$ are of different parity, say $k=s-2i+1$, then the entry $(1,2)$ of the matrix equation \eqref{k} gives us 
\begin{equation*}
 -z_{s-2i+1}+u_{s-2i}
=
-v_{s-2i}+z_{s-2i-1},
\end{equation*}
which combined with \eqref{uz}, \eqref{vy} and \eqref{zy} becomes
\begin{equation}\label{+1-1}
u^s_{s-2i}
=
-u_{s-2i-2}^s.
\end{equation}
 
Furthermore, using \eqref{uz}, \eqref{vy} and \eqref{zy} it is
clear that the values $u_1^s, u_3^s, \dots , u_s^s$ determine $F_s$. Using \eqref{+1-1} it is straightforward to see that the value of $u_s^s$ determines all the other entries of $F_s$ and that, since $u_s^s=0$,  we have $F_s=\mathbf0$. Therefore, there is no operator $D$ of odd order in $\mathcal D(W)$ such that $\widetilde D=D$. 

Even more, since $F_s$ is zero \eqref{pol} turns out to be 
 \begin{equation*}
  \begin{aligned}
  \sum_{i=0}^r 
{(-1)^i 2^{2i}\left(\tfrac{n+1}{2}+{\widetilde m}+k\right)_i [k]_i}
(p+{\widetilde m}+2i)
\sum_{\ell=2i}^{s-1} [{\widetilde m}]_{\ell-2i}\; u_{\ell-2i}^\ell&
\\
+
  \sum_{i=1}^{r} 
{(-1)^i 2^{2i}\left(\tfrac{n+1}{2}+{\widetilde m}+k\right)_i [k]_i}
\sum_{\ell=2i-1}^{s-1} [{\widetilde m}]_{\ell-2i+1}\; z_{\ell-2i+1}^\ell
&
-
(p+{\widetilde m})
\sum_{\ell=0}^{s-1} [{\widetilde m}+2k]_\ell\; u_\ell^\ell 
=0.
 \end{aligned}
 \end{equation*}  
If we denote  $s'=s-1$ and observe the term of highest total degree in the variables $\widetilde m$ and $k$, we obtain the following equation,
 \begin{equation*}
    \sum_{i=1}^{r} 
(-1)^i 2^{2i}({\widetilde m}+k)^i k^i
{\widetilde m}^{{s'}-2i+1}\;
\left( u_{{s'}-2i}^{s'} + z_{{s'}-2i+1}^{s'}\right)
-\widetilde m(\widetilde m+2k)^{s'}u_\ell^\ell
=0.
\end{equation*}
If $u_{s'}^{s'}=0$ then this equation is exactly  equation  \eqref{rep} with $s'$ in place of $s$; therefore with the same procedure as before we obtain an equation similar to \eqref{uz}, with $s'$ in place of $s$; if $v_{s'}^{s'}=0$, then we have also an equation similar to \eqref{vy}. Equation \eqref{k} and the 
subsequent equations \eqref{zy} and \eqref{+1-1} are obtained independently of the parity of $s$, therefore they remain valid for $s'$, having then that if $u_{s'}^{s'}=v_{s'}^{s'}=0$ then $F_{s'}={\bf 0}$.
 
Finally, for a given operator $D=\sum_{\ell=0}^s \partial^\ell F_\ell(x)\in \mathcal D(W)$, with $\widetilde D=D$, of even order $s=2r$ we can consider the operator 
$$
D-u_s^sD_1^r-v_s^sD_2^r,
$$
which has to be of order $s-2$. Inductively it can be easily seen that the set of operators 
$$
\left\{
D_1,D_2
\right\}
$$
generates the subalgebra of all the operators in $\mathcal D(W)$ such that $\widetilde D=D$.

The following theorem summarizes all the results obtained up to this point in this section.

\begin{thm}\label{T}
 Any operator $D=\sum_{\ell=0}^s\partial^\ell\,F_\ell\in\mathcal D(W)$ of order $s$, such that $\widetilde D=D$, is of even order and the polynomial $F_s$ is of degree $s$.
 
 Furthermore, the operators $D_1$ and $D_2$ generate the  subalgebra 
$
\left\{ 
D\in\mathcal D(W): \widetilde D=D
\right\}
$
of $\mathcal D(W).$
\end{thm}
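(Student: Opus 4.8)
The plan is to obtain both assertions from a close analysis of the leading coefficient $F_s$, playing off two genuinely independent sources of constraints on $D$. First I would translate $\widetilde D = D$ into structure: writing $F_\ell = \sum_i x^i F_i^\ell$, the identity is equivalent to $F_j^\ell = (-1)^{\ell+j}\,T F_j^\ell T$, so that every coefficient matrix whose two indices have equal parity is diagonal and every one with indices of opposite parity is anti-diagonal. This is exactly the parametrisation by scalars $u,v$ (diagonal entries) and $y,z$ (anti-diagonal entries) introduced above. Assuming toward a contradiction that $s=2r+1$ is odd, I would feed this decomposition into Proposition \ref{simpl} with $m=2k$, and use the explicit ratios \eqref{par2} and \eqref{impar2} to cancel the common factor $(B^w_{w-2k})_{11}$; the $(1,1)$-entry of the resulting relation becomes, after setting $w=\widetilde m+2k$, the two-variable polynomial identity \eqref{pol}, valid for all $\widetilde m,k\in\NN_0$.

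The heart of the parity argument is then to mine \eqref{pol} for relations on $F_s$. Read as a polynomial in $k$ alone, its coefficients of degree $s+1$ and $s$ give \eqref{uz0}, namely $z_0^s=u_s^s=0$; read in both variables and restricted to its highest total degree, it yields \eqref{rep}, whose expansion and collection of powers of $k$ produce the relations \eqref{uz}, $u_{s-2i}^s=-z_{s-2i+1}^s$, with the $(2,2)$-entry giving the mirror relations \eqref{vy0} and \eqref{vy}. The second, independent constraint is that $\Lambda_w(D)$ is diagonal, which follows from Proposition \ref{L} because $\widetilde D=D$ forces $\Lambda_w=T\Lambda_w T$; hence $D$ commutes with $D_1$, and equating top-order symbols in $DD_1=D_1D$ gives the purely algebraic matrix identity \eqref{k} on $F_s$, splitting by the parity of $k$ into \eqref{zy} and \eqref{+1-1}. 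Chaining \eqref{uz}, \eqref{vy}, \eqref{zy} and \eqref{+1-1} propagates the single value $u_s^s$ through all entries of $F_s$; since $u_s^s=0$ this forces $F_s=\mathbf 0$, contradicting that $s$ is the order, so $s$ must be even. Running the identical chain of relations for even $s$ shows that vanishing of the top coefficients $u_s^s=v_s^s=0$ already forces $F_s=\mathbf 0$, which is precisely the statement that $\deg F_s=s$ whenever $F_s\neq\mathbf 0$.

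For the generation statement I would argue by downward induction on the even order $s=2r$. The key computation is that $D_1^r$ and $D_2^r$ are fixed by the involution (since $\widetilde D_1=D_1$ and $\widetilde D_2=D_2$) and have leading $x^{2r}$-coefficients equal to $\mathrm{diag}(1,0)$ and $\mathrm{diag}(0,1)$ respectively, as one sees by multiplying the leading symbols read off from Lemma \ref{PZ}. Consequently $D-u_s^s D_1^r - v_s^s D_2^r$ still lies in the subalgebra $\{D\in\mathcal D(W):\widetilde D=D\}$ and has top coefficient satisfying $u_s^s=v_s^s=0$; by the degree result its leading coefficient vanishes, so its order drops to at most $s-2$. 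Iterating reduces to an order-zero operator, which is fixed by the involution and lies in $\mathcal D(W)$, hence commutes with every $Q_w$ and is therefore a scalar multiple of $\mathbf I$; this lies in the unital algebra generated by $D_1$ and $D_2$.

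I expect the main obstacle to be the bookkeeping in the second paragraph: reconciling the infinite family of polynomial identities coming from Proposition \ref{simpl} with the single commutator relation \eqref{k}, and in particular verifying that the highest-total-degree extraction from \eqref{pol} yields exactly \eqref{uz}/\eqref{vy} and meshes with \eqref{zy}/\eqref{+1-1} so that $F_s$ is entirely controlled by the one scalar $u_s^s$. A secondary but necessary check is the leading-coefficient computation for the powers $D_1^r$ and $D_2^r$ underpinning the inductive reduction.
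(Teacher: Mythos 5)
Your proposal is correct and follows essentially the same route as the paper's own proof: the same involution-induced diagonal/anti-diagonal decomposition of the coefficients, the same two-variable polynomial identity \eqref{pol} extracted from Proposition \ref{simpl} yielding \eqref{uz0}--\eqref{vy}, the same commutation with $D_1$ producing \eqref{k}, \eqref{zy}, \eqref{+1-1}, and the same inductive reduction $D-u_s^sD_1^r-v_s^sD_2^r$. The only additions (checking the top $x^{2r}$-coefficients of $D_1^r$, $D_2^r$ and settling the order-zero base case) are details the paper leaves implicit.
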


\begin{cor}\label{C}
Any operator $D=\sum_{\ell=0}^s\partial^\ell\,F_\ell\in\mathcal D(W)$ of order $s$, such that $\widetilde D=-D$, is of even order and the polynomial $F_s$ is of degree $s$.
\end{cor}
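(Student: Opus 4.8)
The plan is to deduce this from Theorem~\ref{T} by multiplying $D$ by a suitable anti-invariant operator of order two. Recall that $\widetilde{D_3}=-D_3$ and $\widetilde{D_4}=-D_4$, and that $D\mapsto\widetilde D$ is an algebra automorphism. Hence, if $\widetilde D=-D$, then for $E:=D_3+D_4$ we have $\widetilde{E}=-E$ and therefore
$$\widetilde{DE}=\widetilde D\,\widetilde E=(-D)(-E)=DE,$$
so $DE$ lies in the invariant subalgebra to which Theorem~\ref{T} applies. The point is to choose the multiplier so that passing from $D$ to $DE$ raises the order by exactly two and the degree of the leading coefficient by exactly two; then the parity and degree statements for $DE$ transfer back to $D$.

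First I would record the leading symbols. Writing $F_s$ for the (nonzero) leading coefficient of $D$ and $R_3,R_4$ for the leading coefficients of $D_3,D_4$, the composition $DE$ has order-$(s+2)$ coefficient $F_s(R_3+R_4)$, since in the right-action calculus the top-order term of a product is the product of the top-order coefficients. Now
$$R_3+R_4=\begin{pmatrix}0&x^2-1\\x^2-1&0\end{pmatrix}=(x^2-1)\begin{pmatrix}0&1\\1&0\end{pmatrix},$$
which is an invertible matrix function. Consequently $F_s(R_3+R_4)\neq\mathbf0$, so $DE$ has order exactly $s+2$, and moreover $\deg\big(F_s(R_3+R_4)\big)=\deg F_s+2$, because multiplication by the constant invertible matrix $\left(\begin{smallmatrix}0&1\\1&0\end{smallmatrix}\right)$ preserves degree while the scalar factor $x^2-1$ raises it by two.

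To finish, apply Theorem~\ref{T} to the invariant operator $DE$ of order $s+2$: it gives that $s+2$ is even, whence $s$ is even, and that its leading coefficient $F_s(R_3+R_4)$ has degree $s+2$. Combining this with $\deg\big(F_s(R_3+R_4)\big)=\deg F_s+2$ yields $\deg F_s=s$, as desired. The main obstacle is exactly the order and degree bookkeeping of the previous paragraph: multiplying by an operator with a singular leading symbol (such as $D_3$ or $D_4$ separately, whose symbols $R_3,R_4$ have vanishing determinant) could drop the order of the product and thereby break the reduction. This is precisely why I would use the combination $E=D_3+D_4$, whose leading symbol $(x^2-1)\left(\begin{smallmatrix}0&1\\1&0\end{smallmatrix}\right)$ is nonsingular as a matrix function; any anti-invariant multiplier of order two with nonsingular leading symbol would serve equally well.
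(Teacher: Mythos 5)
Your proof is correct and follows essentially the same route as the paper: the paper's proof also forms $D_0=D(D_3+D_4)$, notes $\widetilde{D_0}=D_0$ and that the leading symbol of $D_3+D_4$ is $\partial^2(x^2-1)\left(\begin{smallmatrix}0&1\\1&0\end{smallmatrix}\right)$, and then applies Theorem~\ref{T} to transfer parity and degree back to $D$. Your bookkeeping of the leading coefficient as $F_s(R_3+R_4)$, with the invertibility of $\left(\begin{smallmatrix}0&1\\1&0\end{smallmatrix}\right)$ made explicit, is in fact slightly more careful than the paper's statement $G_{s+2}=(1-x^2)F_s$.
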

\begin{proof}
Given an operator $D=\sum_{\ell=0}^s\partial^\ell F_\ell\in\mathcal D(W)$ of order $s$ such that $\widetilde D=-D$ let us consider the operator 
$$D_0=D(D_3+D_4).$$
Since $\widetilde D_3 =-D_3$ and $\widetilde D_4 =-D_4$ we have that $\widetilde D_0 =D_0$. Also, the highest order term of $D_3+D_4$ is 
$\partial^2(x^2-1)
\left(\begin{smallmatrix}
                   0&1\\1&0
                  \end{smallmatrix}\right)
$, therefore the order of $ D_0$ is $s+2$, which implies, by Theorem \ref{T}, that $s+2$ is even and that if $D_0=\sum_{\ell=0}^{s+2}\partial^\ell G_\ell$ then $G_{s+2}=(1-x^2)F_s$ is a polynomial of degree $s+2$, hence $F_s$ is a polynomial of order $s$.
\end{proof}

Now we give some results that are pretty technical. In the proofs we strongly use the relations in Remark \ref{rem}.
In particular we have
\begin{align*}
(D_1+D_2)^kD_1=&D_1^{k+1},& (D_1+D_2)^kD_2=&D_2^{k+1},&
(D_1+D_2)^kD_3&=D_2^{k}D_3,&(D_1+D_2)^kD_4&=D_1^{k}D_4,
\end{align*}
thus, looking at \eqref{eigen}, it is also clear that 
 $$ \left\{ (D_1+D_2)^iD_1,(D_1+D_2)^iD_2,(D_1+D_2)^iD_3,(D_1+D_2)^iD_4,\mathbf I: i\in\mathbb N_0 \right\}$$
 is a linear independent set over $\mathbb C$.
\begin{thm}\label{TT}
 The algebra $\mathcal D(W)$ is generated by 
 $
\left\{
D_1,D_2,D_3,D_4
\right\}
$.
If $D=\sum_{\ell=0}^s\partial^\ell\,F_\ell\in\mathcal D(W)$ with $F_s\ne \mathbf0$ then $s$ is even and $F_s$ is of degree $s$.
\end{thm}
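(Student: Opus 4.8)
The plan is to split the problem using the involutive automorphism $D\mapsto\widetilde D$ of Proposition~\ref{L} and the theorem following it. Since this map is an algebra automorphism of $\mathcal D(W)$, for any $D\in\mathcal D(W)$ both operators $D_+=\tfrac12(D+\widetilde D)$ and $D_-=\tfrac12(D-\widetilde D)$ again lie in $\mathcal D(W)$, with $\widetilde{D_+}=D_+$ and $\widetilde{D_-}=-D_-$. Thus $\mathcal D(W)=\mathcal D(W)_+\oplus\mathcal D(W)_-$ as a $\ZZ/2$-graded algebra. By Theorem~\ref{T} the even part $\mathcal D(W)_+$ is already generated by $D_1,D_2$, so the whole generation claim reduces to showing that every antisymmetric operator $D\in\mathcal D(W)_-$ lies in the algebra generated by $D_1,D_2,D_3,D_4$. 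The strategy for this is an induction on the order, parallel to the reduction $D-u_s^sD_1^r-v_s^sD_2^r$ used for the symmetric case, but now subtracting suitable odd products.

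For the inductive step, take $D\in\mathcal D(W)_-$ with $D\ne\mathbf0$. By Corollary~\ref{C} its order is even, say $s=2r$, and $\deg F_s=s$. The natural odd operators of order $2r$ to subtract are $D_2^{r-1}D_3$ and $D_1^{r-1}D_4$, which are antisymmetric by Remark~\ref{rem} and the sign rules $\widetilde D_3=-D_3$, $\widetilde D_4=-D_4$. To compute their leading coefficients I would use that the principal symbol (the coefficient of $\partial^{2r}$) is multiplicative and that each $\sigma(D_i)$ has rank one: writing $u={\textstyle\binom{x}{-1}}$, $v={\textstyle\binom{-1}{x}}$, $a=(x,1)$, $b=(1,x)$, one has $\sigma(D_1)=ua$, $\sigma(D_2)=vb$, $\sigma(D_3)=va$, $\sigma(D_4)=ub$, with $au=bv=x^2-1$ and $av=bu=0$. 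Hence $\sigma(D_2^{r-1}D_3)=(x^2-1)^{r-1}\,\sigma(D_3)$ and $\sigma(D_1^{r-1}D_4)=(x^2-1)^{r-1}\,\sigma(D_4)$, each a matrix polynomial of degree exactly $2r$ whose top coefficient (the $x^{2r}$ term) is the matrix unit $E_{21}$, respectively $E_{12}$.

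Next I would record the parity forced by antisymmetry: for $D\in\mathcal D(W)_-$ of order $s=2r$ the identity $(-1)^sT\widetilde{F_s}T=-F_s$ forces the diagonal entries of $F_s$ to be odd and the off-diagonal entries even, so the $x^{2r}$ coefficient of $F_s$ is off-diagonal, say $\left(\begin{smallmatrix}0&\kappa\\\lambda&0\end{smallmatrix}\right)$. Then $D'=D-\lambda\,D_2^{r-1}D_3-\kappa\,D_1^{r-1}D_4$ still lies in $\mathcal D(W)_-$ and its coefficient of $\partial^{2r}$ now has degree strictly less than $2r$. By Corollary~\ref{C} an antisymmetric operator of order $2r$ must have leading coefficient of degree exactly $2r$; since $D'$ fails this, its order is $<2r$, and induction applies. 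The base case $r=1$ is exactly $\operatorname{span}\{D_3,D_4\}$ by Lemma~\ref{PZ}. This proves that $D_1,D_2,D_3,D_4$ generate $\mathcal D(W)$. Finally, for the dichotomy on $F_s$: writing $D=D_++D_-$, the $x^{t}$ coefficients of the leading terms of $D_+$ and $D_-$ are purely diagonal and purely off-diagonal respectively (and nonzero, by Theorem~\ref{T} and Corollary~\ref{C}); they therefore cannot cancel, so $\operatorname{ord}(D)=\max(\operatorname{ord}D_+,\operatorname{ord}D_-)$ is even and $\deg F_s=s$ whenever $F_s\ne\mathbf0$.

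The main obstacle I anticipate is the order-reduction step: it is not enough to lower the degree of the eigenvalue polynomials $\Lambda_w$, since by Proposition~\ref{22} that degree only bounds the order from below, so I must argue directly at the level of the $\partial^{2r}$-coefficient. The crucial leverage is Corollary~\ref{C}, which converts ``the leading coefficient has degree $<2r$'' into ``the order has actually dropped''; confirming that the two products $D_2^{r-1}D_3,\ D_1^{r-1}D_4$ supply precisely the two off-diagonal directions needed to cancel the top coefficient of an arbitrary antisymmetric $F_{2r}$ is the computational heart of the argument, and the rank-one factorization above is what makes it transparent.
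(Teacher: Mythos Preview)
Your argument is correct and follows essentially the same route as the paper: split via the involution, invoke Theorem~\ref{T} for the symmetric part, and for the antisymmetric part subtract a combination of order-$2r$ odd operators to kill the top $x^{2r}$ coefficient, then use Corollary~\ref{C} to drop the order. Your choices $D_2^{\,r-1}D_3$ and $D_1^{\,r-1}D_4$ are literally the paper's $(D_1+D_2)^{r-1}D_3$ and $(D_1+D_2)^{r-1}D_4$ by Remark~\ref{rem}, and your rank-one factorization of the principal symbols together with the diagonal/off-diagonal parity argument for the final dichotomy makes explicit what the paper leaves to the reader.
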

\begin{proof}
For any $D\in\mathcal D(W)$ we have that $D=E_1+E_2$, with $\widetilde E_1=E_1$ and $\widetilde E_2=-E_2$.
Thus, it only remains to prove that if $D=\sum_{\ell=0}^s\partial^\ell\,F_\ell\in\mathcal D(W)$, with $\widetilde D=-D$, then it is generated by $ \left\{ D_1,D_2,D_3,D_4 \right\}$. 
 Let $F_s=\sum_{j=0}^s F_j^s$. From Corollary \ref{C} we know that $s$ is even and from the relation $\widetilde D=-D$ it follows that $(F_s^s)_{11}=(F_s^s)_{22}=0$.
 If $s\le2$ the assertion is already proved by Lemma \ref{PZ}, if $s\ge4$ we consider the operator 
 $$
 D_0=D-(D_1+D_2)^{\frac s2-1}
 \left(
 \left(F_s^s\right)_{21}D_3+
 \left(F_s^s\right)_{12}D_4
\right) .
$$
It can be easily checked that $\widetilde D_0=-D_0$ and that $D_0=\sum_{\ell=0}^s\partial^\ell\,G_\ell$ with $G_s$ is of degree less than $s$ and therefore, by Corollary \ref{C}, $G_s=0$, then $D_0$ is an operator of order at most $s-2$, hence by induction we obtain the statement in the theorem.
\end{proof}
With a very similar proof we obtain  the following corollary.
\begin{cor}\label{vector}
 The vector space $\mathcal D(W)$ is spanned  by the set
 $$ \left\{ (D_1+D_2)^iD_1,(D_1+D_2)^iD_2,(D_1+D_2)^iD_3,(D_1+D_2)^iD_4,\mathbf I: i\in\mathbb N_0 \right\}.$$
\end{cor}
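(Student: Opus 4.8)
The plan is to argue by induction on the order $s$ of an operator $D\in\mathcal D(W)$, imitating the reductions already performed in the proofs of Theorem \ref{T} and Theorem \ref{TT}. The base case $s\le 2$ is immediate: by Lemma \ref{PZ} the operators of order at most two form the five-dimensional space spanned by $\mathbf I,D_1,D_2,D_3,D_4$, and these are precisely the members of the proposed spanning set with $i=0$, together with $\mathbf I$.

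For the inductive step I would take $D=\sum_{\ell=0}^s\partial^\ell F_\ell\in\mathcal D(W)$ of order $s=2r\ge 4$ (even, by Theorem \ref{TT}) and split it as $D=E_1+E_2$ with $\widetilde E_1=E_1$ and $\widetilde E_2=-E_2$, each of even order. The engine of the reduction is leading-coefficient bookkeeping via eigenvalues: writing $F_s=\sum_j x^jF_j^s$, Proposition \ref{22} shows that the coefficient of $w^s$ in $\Lambda_w(D)$ equals $F_s^s$, so $F_s^s$ can be read off from the top-degree term of $\Lambda_w(D)$. Since $D\mapsto\Lambda_w(D)$ is an algebra homomorphism, the top coefficient of any product of generators is computable from the explicit eigenvalues in \eqref{eigen}.

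Using the relations displayed before Theorem \ref{TT}, namely $(D_1+D_2)^{r-1}D_1=D_1^r$, $(D_1+D_2)^{r-1}D_2=D_2^r$, $(D_1+D_2)^{r-1}D_3=D_2^{r-1}D_3$ and $(D_1+D_2)^{r-1}D_4=D_1^{r-1}D_4$, together with \eqref{eigen}, one checks that the four operators $(D_1+D_2)^{r-1}D_j$ have order exactly $s$ and leading matrix coefficients equal to the elementary matrices $E_{11},E_{22},E_{21},E_{12}$ respectively. For the symmetric part, $F_s^s$ of $E_1$ is diagonal (as established in Theorem \ref{T}), say $\operatorname{diag}(a,d)$, so $E_1-a\,(D_1+D_2)^{r-1}D_1-d\,(D_1+D_2)^{r-1}D_2$ has vanishing top coefficient and therefore order strictly less than $s$. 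For the antisymmetric part, $F_s^s$ of $E_2$ is off-diagonal (as in the proof of Theorem \ref{TT}), say with entry $b$ in position $(1,2)$ and $c$ in position $(2,1)$, so $E_2-c\,(D_1+D_2)^{r-1}D_3-b\,(D_1+D_2)^{r-1}D_4$ is again antisymmetric and of order less than $s$. By the inductive hypothesis both reduced operators lie in the span, whence so does $D$.

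The routine but essential obstacle is the verification of the previous paragraph: confirming that the four operators $(D_1+D_2)^{r-1}D_j$ really do realize the four elementary matrices as their leading coefficients, with the correct normalization. This is exactly where the explicit eigenvalues \eqref{eigen} and the multiplicativity of $\Lambda_w$ are indispensable, since they reduce the computation of each leading coefficient to reading off the coefficient of $w^{2r}$ in an explicit product of the diagonal and off-diagonal eigenvalue matrices; the fact that each such leading coefficient is a single elementary matrix, rather than a combination, is what makes the reduction close up cleanly at every step. Linear independence of the set was already noted before Theorem \ref{TT}, so this spanning set is in fact a basis, although only the spanning statement is needed here.
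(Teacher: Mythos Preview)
Your argument is correct and follows the paper's own proof: induct on the order $s=2r$, subtract a suitable $\mathbb C$-linear combination of the four operators $(D_1+D_2)^{r-1}D_j$ so as to annihilate the top coefficient $F_s^s$, and then use the degree rigidity to conclude the residual has order at most $s-2$. The paper performs the subtraction in a single stroke---taking $D_0=D-(D_1+D_2)^{r-1}\bigl((F_s^s)_{11}D_1+(F_s^s)_{22}D_2+(F_s^s)_{21}D_3+(F_s^s)_{12}D_4\bigr)$ and invoking Theorem~\ref{TT} directly---whereas you first split $D=E_1+E_2$ into its $\widetilde{\ }$-symmetric and antisymmetric parts and handle them separately via Theorem~\ref{T} and Corollary~\ref{C}; this detour is unnecessary once Theorem~\ref{TT} is available, but it does no harm.
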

\begin{proof}
 Let $D\in\mathcal D(W)$, then it is of the form
 $$D=\sum_{\ell=0}^{2r}\partial^\ell\,F_\ell,$$
 for some $r\in\NN_0$, with $F_s=\sum_{j=0}^s F_j^s$. 
 Let us consider the operator 
 $$
 D_0=D-(D_1+D_2)^{k-1}
 \left(
 \left(F_s^s\right)_{11}D_1+
 \left(F_s^s\right)_{22}D_2+
 \left(F_s^s\right)_{21}D_3+
 \left(F_s^s\right)_{12}D_4
\right) .
$$
It can be easily checked that if $D_0=\sum_{\ell=0}^s\partial^\ell\,G_\ell$ then $G_s$ is of degree smaller than $s$, and therefore by Theorem \ref{TT} $D_0$ is an operator of order at most $s-2$. Again, by induction we obtain the statement in the theorem.
\end{proof}

\begin{cor}\label{basis}
$\mathcal D(W)$ is a free left-module over the subalgebra generated by $D_1+D_2$. The following set is a basis 
 $$ \left\{ \mathbf I,D_2,D_3,D_4\right\}.$$
\end{cor}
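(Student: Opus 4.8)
The plan is to set $S=D_1+D_2$ and let $R$ denote the commutative subalgebra $\CC[S]$ generated by $S$, and then to verify the two halves of the claim separately: that $\{\mathbf I, D_2, D_3, D_4\}$ generates $\mathcal D(W)$ as a left $R$-module, and that it is $R$-linearly independent. Throughout I would work with the rewritten spanning set of Corollary \ref{vector}: using the identities $(D_1+D_2)^iD_1=D_1^{i+1}$, $(D_1+D_2)^iD_2=D_2^{i+1}$, $(D_1+D_2)^iD_3=D_2^iD_3$ and $(D_1+D_2)^iD_4=D_1^iD_4$ recorded just before Theorem \ref{TT}, the spanning set becomes $\{\mathbf I\}\cup\{S^iD_1,S^iD_2,S^iD_3,S^iD_4:i\ge 0\}$, which is moreover $\CC$-linearly independent, as noted there; hence it is a $\CC$-basis of $\mathcal D(W)$.

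For the generating step, the only point requiring care is that $D_1$ is absent from the proposed basis, so I must recover the family $\{S^iD_1\}$ from $R\mathbf I$ and $RD_2$. Since $D_1D_2=D_2D_1=0$ by Remark \ref{rem}, one has $S^i=D_1^i+D_2^i$ for $i\ge 1$, so $R\mathbf I=\operatorname{span}(\{\mathbf I\}\cup\{D_1^i+D_2^i:i\ge 1\})$, while $RD_2=\operatorname{span}\{D_2^{i}:i\ge 1\}$. Subtracting shows $D_1^i\in R\mathbf I+RD_2$ for every $i\ge 1$, hence $S^iD_1=D_1^{i+1}$ and $S^iD_2=D_2^{i+1}$ lie in $R\mathbf I+RD_2$ for all $i\ge 0$. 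As $S^iD_3\in RD_3$ and $S^iD_4\in RD_4$ trivially, we conclude $R\mathbf I+RD_2+RD_3+RD_4=\mathcal D(W)$.

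For $R$-linear independence, suppose $p_0(S)\mathbf I+p_2(S)D_2+p_3(S)D_3+p_4(S)D_4=0$ with $p_j\in\CC[t]$, say $p_j(S)=\sum_i a_{j,i}S^i$. The terms $p_3(S)D_3=\sum_i a_{3,i}S^iD_3$ and $p_4(S)D_4=\sum_i a_{4,i}S^iD_4$ involve basis vectors $S^iD_3$ and $S^iD_4$ that occur nowhere else, so $\CC$-independence forces $p_3=p_4=0$. For the remaining relation $p_0(S)\mathbf I+p_2(S)D_2=0$ I would expand $S^i=S^{i-1}D_1+S^{i-1}D_2$ for $i\ge 1$ to rewrite everything in the basis: the coefficient of $S^jD_1$ comes only from $p_0$ and equals $a_{0,j+1}$, forcing $a_{0,i}=0$ for $i\ge 1$; the coefficient of $\mathbf I$ then forces $a_{0,0}=0$; and finally $p_2(S)D_2=\sum_i a_{2,i}S^iD_2=0$ forces $p_2=0$. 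Thus all $p_j$ vanish, which gives freeness with the stated basis.

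I expect the generating step, not the independence step, to be the main obstacle: the four proposed generators are fewer than the four natural families $\{S^iD_k\}$, and it is precisely the degeneracy relations $D_1D_2=D_2D_1=0$ of Remark \ref{rem} — which collapse $(D_1+D_2)^i$ to $D_1^i+D_2^i$ — that allow $R\mathbf I$ together with $RD_2$ to reproduce the missing $D_1$-family. Once this is secured, the independence is bookkeeping against the $\CC$-basis furnished by Corollary \ref{vector}.
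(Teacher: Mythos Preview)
Your argument is correct. The paper states Corollary \ref{basis} without proof, treating it as an immediate consequence of Corollary \ref{vector} together with the $\CC$-linear independence observation made just before Theorem \ref{TT}; your write-up is precisely the routine verification the paper is leaving implicit, and it uses exactly the intended ingredients (the identity $S^i=D_1^i+D_2^i$ from $D_1D_2=D_2D_1=0$ to absorb the $D_1$-family into $R\mathbf I+RD_2$, and coefficient comparison against the $\CC$-basis for independence).
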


As a direct consequence of Proposition 2.2 and Theorem 4.3 we have
\begin{cor}\label{cor}
For any $D\in D(W)$, the order of $D$ equals the degree of $\Lambda_w(D)$ as a polynomial in $w$.
\end{cor}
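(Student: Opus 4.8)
The plan is to read off both inequalities directly from the explicit formula for the eigenvalue polynomial. By Proposition~\ref{22}, if $D=\sum_{\ell=0}^s\partial^\ell F_\ell$ has order $s$ and we write $F_\ell=\sum_{j=0}^\ell x^j F_j^\ell$, then
$$
\Lambda_w(D)=\sum_{\ell=0}^s [w]_\ell\, F_\ell^\ell,
$$
where each $[w]_\ell=w(w-1)\cdots(w-\ell+1)$ is a monic polynomial of degree $\ell$ in $w$. This presentation gives the easy inequality at once: every summand has degree at most $s$ in the variable $w$, so $\deg_w\Lambda_w(D)\le s=\operatorname{ord}(D)$, which is exactly the bound already contained in Proposition~\ref{22}.

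For the reverse inequality I would isolate the coefficient of $w^s$. Since $[w]_\ell$ has degree $\ell<s$ for $\ell<s$ while $[w]_s$ is monic of degree $s$, the only contribution to the top power $w^s$ comes from the last summand, so the leading coefficient of $\Lambda_w(D)$ as a matrix polynomial in $w$ is precisely $F_s^s$, the coefficient of $x^s$ in $F_s$. Hence $\deg_w\Lambda_w(D)=s$ holds exactly when $F_s^s\ne\mathbf 0$, that is, when $\deg F_s=s$.

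The step where the genuine work has already been carried out is Theorem~\ref{TT}: it guarantees that for any $D\in\mathcal D(W)$ of order $s$, so that $F_s\ne\mathbf 0$, one necessarily has $\deg F_s=s$, ruling out the a priori possibility that the top-order term degenerates in $x$. Combining this with the observation above we get $F_s^s\ne\mathbf 0$, so the $w^s$-coefficient of $\Lambda_w(D)$ does not vanish and $\deg_w\Lambda_w(D)=s$; together with the easy bound this yields the claimed equality. No further computation is needed, and the only real subtlety is the structural input of Theorem~\ref{TT}, which is what prevents the leading matrix coefficient from collapsing.
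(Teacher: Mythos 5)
Your proof is correct and follows exactly the route the paper intends: the paper states this corollary as ``a direct consequence of Proposition \ref{22} and Theorem \ref{TT}'', and your argument is precisely the spelled-out version of that, reading the $w^s$-coefficient $F_s^s$ off the eigenvalue formula and using Theorem \ref{TT} to guarantee $F_s^s\ne\mathbf 0$. Nothing is missing; you have simply made explicit the two-line deduction the paper leaves to the reader.
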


\section{Structure of $\mathcal D(W)$}\label{Str}

In this section we go deeper into the structure of the algebra $\mathcal D(W)$. Also, we prove that the non-similar weights $W_{p,n}$ have isomorphic algebras $\mathcal D(W_{p,n})$.

\begin{thm}
 The algebra $\mathcal D(W)$ is generated by 
$$ \left\{ D_1+D_2,D_3+D_4 \right\}.$$ 
\end{thm}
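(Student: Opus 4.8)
The plan is to set $A = D_1+D_2$ and $B = D_3+D_4$ and, invoking Theorem \ref{TT}, reduce the claim to showing that each of the four generators $D_1,D_2,D_3,D_4$ lies in the subalgebra generated by $A$ and $B$. Since $\widetilde A = A$ while $\widetilde B = -B$, I expect the powers of $A$ to produce only the symmetric combinations $D_1^k+D_2^k$, so that $A$ alone cannot separate $D_1$ from $D_2$; the whole point will be to use the extra information carried by $B^2$ and by the commutator $[A,B]$, whose nonvanishing antisymmetric parts make the individual summands visible. Throughout I will lean on the sixteen product relations of Remark \ref{rem}.

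First I would compute $A^2$. Because $D_1D_2 = D_2D_1 = 0$, this collapses to $A^2 = D_1^2+D_2^2$. Next I would expand $B^2 = D_3^2 + D_3D_4 + D_4D_3 + D_4^2$ and apply $D_3^2 = D_4^2 = 0$, together with $D_3D_4 = D_2^2 - (n-2p)D_2$ and $D_4D_3 = D_1^2 + (n-2p)D_1$, to obtain $B^2 = A^2 + (n-2p)(D_1 - D_2)$. Since $0 < p < n/2$ forces $n-2p \neq 0$, I can solve $D_1 - D_2 = (n-2p)^{-1}(B^2 - A^2)$, and combining this with $D_1 + D_2 = A$ recovers both $D_1$ and $D_2$ as polynomials in $A$ and $B$.

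To separate $D_3$ and $D_4$ I would turn to the commutator. Expanding $AB$ and discarding $D_1D_3 = D_2D_4 = 0$ leaves $AB = D_1D_4 + D_2D_3$, which by $D_1D_4 = D_4D_2 - (n-2p)D_4$ and $D_2D_3 = D_3D_1 + (n-2p)D_3$ becomes $AB = D_3D_1 + D_4D_2 + (n-2p)(D_3 - D_4)$. Expanding $BA$ and using $D_3D_2 = D_4D_1 = 0$ gives simply $BA = D_3D_1 + D_4D_2$. Subtracting yields $AB - BA = (n-2p)(D_3 - D_4)$, so dividing by $n-2p$ and combining with $D_3 + D_4 = B$ recovers $D_3$ and $D_4$. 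Thus $\{D_1,D_2,D_3,D_4\}$ is contained in the algebra generated by $A$ and $B$, which by Theorem \ref{TT} is all of $\mathcal D(W)$.

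I do not anticipate a serious obstacle here: the argument is driven entirely by the quadratic relations of Remark \ref{rem}, and the only hypothesis doing real work is the nondegeneracy $n-2p \neq 0$ guaranteed by $0 < p < n/2$, which is exactly what exposes the antisymmetric combinations $D_1 - D_2$ and $D_3 - D_4$ inside $B^2 - A^2$ and $[A,B]$. The single point requiring care is that the cross terms in $AB$, $BA$ and $B^2$ vanish or simplify precisely as stated, and this is a direct check against the tabulated products.
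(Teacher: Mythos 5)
Your proof is correct and takes essentially the same route as the paper: your first step, $B^2=A^2+(n-2p)(D_1-D_2)$, is exactly the paper's, and both arguments conclude by invoking Theorem \ref{TT}. The only divergence is in separating $D_3$ and $D_4$ --- the paper uses $(D_3+D_4)D_1=D_2(D_3+D_4)-(n-2p)D_3$, whereas you use the commutator identity $AB-BA=(n-2p)(D_3-D_4)$; both are immediate consequences of Remark \ref{rem}, yours having the mild advantage of not depending on first recovering $D_1$ and $D_2$.
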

\begin{proof}
From Remark \ref{rem} we have that
$$(D_3+D_4)^2=(D_1+D_2)^2+(n-2p)(D_1-D_2),$$
therefore  $ \left\{ D_1+D_2,D_3+D_4 \right\}$ generates $D_1$ and $D_2$.

From
$$(D_3+D_4)D_1=D_3D_1 =D_2D_3-(n-2p)D_3=D_2(D_3+D_4)-(n-2p)D_3$$
we have that $ \left\{ D_1+D_2,D_3+D_4 \right\}$ generates also $D_3$ and  $D_4$. The theorem follows from Theorem \ref{TT}.
\end{proof}
Let us define 
\begin{align*}
A=(D_1+D_2)/(2n-p),&& B=(D_3+D_4)/(2n-p).
\end{align*}

Therefore we have the following expressions for the operators $D_1,D_2,D_3,D_4$ (see Remark \ref{rem})

\begin{equation}\label{ds}
\begin{aligned}
D_1=&({ {{B}^{2}-{A}^{2}+ A}})({n-2p}) /2        ,&      D_2=  & ({ {{A}^{2}-{B}^{2}+  A}} )({n-2p}) /2                  ,\\
D_3=&  { {(-B A+A B+  B}})({n-2p}) /2       ,&      D_4=&  { {(B A-A B+  B)({n-2p}) /2       }}               .
\end{aligned}
\end{equation}

Let us call $\mathcal A$ to the subalgebra generated by $A$. 

\begin{lem}The set $S=\{\mathbf I,B^2, B,BA\}$ is also a basis of $\mathcal D(W)$ as a left-module over $\mathcal A$.
\end{lem}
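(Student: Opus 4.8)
The plan is to realize $S$ as an invertible change of coordinates from a basis already established. Since $A$ is a nonzero scalar multiple of $D_1+D_2$, the subalgebra $\mathcal A$ generated by $A$ coincides with the subalgebra generated by $D_1+D_2$; hence Corollary \ref{basis} already gives that $\{\mathbf I,D_2,D_3,D_4\}$ is a basis of $\mathcal D(W)$ as a left $\mathcal A$-module. It therefore suffices to show that the transition from $\{\mathbf I,D_2,D_3,D_4\}$ to $S=\{\mathbf I,B^2,B,BA\}$ is given by a square matrix with entries in $\mathcal A$ whose determinant is a unit, and then read off the inverse.

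First I would rewrite the elements of $S$ in terms of the old basis using \eqref{ds}. Solving the expression for $D_2$ shows that $B^2$ lies in $\operatorname{span}_{\mathcal A}\{\mathbf I,D_2\}$, namely
\begin{equation*}
B^2=(A^2+A)\,\mathbf I-\tfrac{2}{n-2p}\,D_2 .
\end{equation*}
Likewise, \eqref{ds} gives $B=\tfrac{1}{n-2p}(D_3+D_4)$ and $BA-AB=\tfrac{1}{n-2p}(D_4-D_3)$; multiplying $B$ on the left by $A\in\mathcal A$ and adding shows that both $B$ and $BA$ lie in $\operatorname{span}_{\mathcal A}\{D_3,D_4\}$,
\begin{equation*}
B=\tfrac{1}{n-2p}D_3+\tfrac{1}{n-2p}D_4,\qquad BA=\tfrac{A-1}{n-2p}D_3+\tfrac{A+1}{n-2p}D_4 .
\end{equation*}

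The resulting transition matrix is then block diagonal, with one block relating $\{\mathbf I,D_2\}$ to $\{\mathbf I,B^2\}$ and another relating $\{D_3,D_4\}$ to $\{B,BA\}$. Because $\mathcal A$ is generated by a single element it is commutative, so the determinant criterion for invertibility of a matrix over $\mathcal A$ applies; one checks
\begin{equation*}
\det\begin{pmatrix}1&0\\A^2+A&-\tfrac{2}{n-2p}\end{pmatrix}=-\tfrac{2}{n-2p},\qquad
\det\begin{pmatrix}\tfrac{1}{n-2p}&\tfrac{1}{n-2p}\\\tfrac{A-1}{n-2p}&\tfrac{A+1}{n-2p}\end{pmatrix}=\tfrac{2}{(n-2p)^2},
\end{equation*}
both of which are nonzero scalars, hence units of $\mathcal A$ (recall $0<p<n/2$, so $n-2p\neq0$). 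Thus the transition matrix is invertible over $\mathcal A$, its inverse expresses $\{\mathbf I,D_2,D_3,D_4\}$ in terms of $S$, and $S$ is a basis.

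These computations are routine; the only point demanding care, and the main conceptual step, is the correct bookkeeping of the \emph{left} $\mathcal A$-module structure when passing from $AB$ and $BA-AB$ to $BA$, together with the observation that the commutativity of $\mathcal A$ makes the unit-determinant criterion legitimate. The scalar normalization of $A$ and $B$ is immaterial to the argument, since only the relations \eqref{ds} among the $D_i$ and the fact that $n-2p\neq0$ are used.
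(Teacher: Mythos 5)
Your proof is correct and takes essentially the same route as the paper: both rest on Corollary \ref{basis} and on rewriting $B^2$, $B$ and $BA$ in terms of the basis $\{\mathbf I, D_2, D_3, D_4\}$ via the relations \eqref{ds}. The only difference is packaging --- the paper checks generation and $\mathcal A$-linear independence separately by direct substitution, whereas you certify both at once via the unit-determinant criterion for the block-diagonal transition matrix over the commutative ring $\mathcal A$.
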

\begin{proof}
It is clear from \eqref{ds}  and Corollary \ref{basis}  that $S$ generates $\mathcal D(W)$. Let assume that 
$$m_1I+m_2B^2+m_3B+m_4BA=0 $$
for some $m_1,m_2,m_3,m_4\in\mathcal A$. From \eqref{ds} we observe that
\begin{align*}
B^2=A^2  +A-2({n-2p})^{-1}\,D_2 ,   &   &   BA=(D_4-D_3)(n-2p)^{-1}+A(D_3+D_4)  ,\\
\end{align*}
then we have 
\begin{multline*}
(m_1+m_2A^2 I +A)\mathbf I+m_2(-2({n-2p})^{-1})D_2 +\left(m_3+m_4{(-(n-2p)^{-1}\mathbf I+A)}\right)D_3 
+\\
\left(m_3+m_4({(n-2p)^{-1}\mathbf I+A)}\right)D_4 =0,
\end{multline*}
whence it follows in a straightforward way that $m_1=m_2=m_3=m_4=0$.
\end{proof}

It is worth to observe that the following relations hold:
\begin{equation}\label{rel}
\begin{split}
B^2A-AB^2&=0,\\ 
B  {A}^{2}+{A}^{2}  B-2\,A  B  A-B&=0,\\
 B  A  B+{A}^{3}-2\,A  {B}^{2}-A  &=0,\\
 {B}^{3}-2{A}^{2}  B+A  B  A  &=0.
\end{split}
\end{equation}

We shall next prove that the algebra $\mathcal D(W)$ can be described as the complex algebra generated by $A$ and $B$ subject to the relations given in \eqref{rel}.
 Let $V$ be a two dimensional complex vector space and let $T(V)$ be the corresponding tensor algebra. The identity of $T(V)$ will be also denoted by $I$. We choose a basis $\{\alpha ,\beta\}$ of $V$ and define $I(V)$ to be the two sided
ideal of $T(V) $ generated by the  elements on the left-hand side of the following four equations, 
obtained by replacing $A$ by $\alpha$
and $B$ by $\beta$ in \eqref{rel}:
\begin{equation}\label{rell}
\begin{split}
{\beta}^2{\alpha}-{\alpha}{\beta}^2&=0,\\
{\beta}  {{\alpha}}^{2}+{{\alpha}}^{2}  {\beta}-2\,{\alpha}  {\beta}  {\alpha}-{\beta}&=0,\\
 {\beta}  {\alpha}  {\beta}+{{\alpha}}^{3}-2\,{\alpha}  {{\beta}}^{2}-{\alpha}  &=0,\\
 {{\beta}}^{3}-2{{\alpha}}^{2}  {\beta}+{\alpha}  {\beta}  {\alpha}  &=0.
\end{split}
\end{equation}

\begin{thm}
The algebra $\mathcal D(W)$ is isomorphic to the  quotient algebra $T(V)/I(V)$.
\end{thm}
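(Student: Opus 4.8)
The plan is to build the isomorphism from the universal property of the tensor algebra and then reduce the whole statement to a single injectivity claim handled by a normal-form argument. By the universal property of $T(V)$ there is a unique algebra homomorphism $\phi\colon T(V)\to\mathcal D(W)$ with $\phi(\alpha)=A$ and $\phi(\beta)=B$. Since $A$ and $B$ satisfy the four identities \eqref{rel}, which are exactly the generators of $I(V)$ in \eqref{rell} after replacing $\alpha,\beta$ by $A,B$, we get $I(V)\subseteq\ker\phi$, so $\phi$ factors through an algebra homomorphism $\bar\phi\colon T(V)/I(V)\to\mathcal D(W)$. Surjectivity is immediate, because $A$ and $B$ are nonzero scalar multiples of $D_1+D_2$ and $D_3+D_4$, which generate $\mathcal D(W)$ by the generation theorem opening this section. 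Thus everything comes down to showing $\bar\phi$ is injective.

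For injectivity I would first prove that $R:=T(V)/I(V)$ is spanned, as a left module over $\mathcal A':=\mathbb C[\bar\alpha]$, by the four elements $\{I,\bar\beta^2,\bar\beta,\bar\beta\bar\alpha\}$, where bars denote images in $R$. Let $M$ be the left $\mathcal A'$-submodule they generate. Since $R$ is generated as an algebra by $\bar\alpha,\bar\beta$ and $I\in M$, it suffices to check that $M$ is closed under right multiplication by $\bar\alpha$ and by $\bar\beta$; then every word in the generators is obtained from $I$ by successive right multiplications and so lies in $M$, giving $M=R$. The elegant point is that each of the four relations absorbs exactly one of the four nontrivial products back into $M$: the first relation of \eqref{rell} gives $\bar\beta^2\bar\alpha=\bar\alpha\bar\beta^2$; the second gives $\bar\beta\bar\alpha^2=-\bar\alpha^2\bar\beta+2\bar\alpha\bar\beta\bar\alpha+\bar\beta$; the third gives $\bar\beta\bar\alpha\bar\beta=-\bar\alpha^3+2\bar\alpha\bar\beta^2+\bar\alpha$; and the fourth gives $\bar\beta^3=2\bar\alpha^2\bar\beta-\bar\alpha\bar\beta\bar\alpha$. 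In each case the right-hand side lies in $\mathcal A'I+\mathcal A'\bar\beta+\mathcal A'\bar\beta\bar\alpha+\mathcal A'\bar\beta^2$, since left factors from $\mathcal A'=\mathbb C[\bar\alpha]$ are harmless; the remaining products ($I\cdot\bar\alpha$, $\bar\beta\cdot\bar\alpha$, $I\cdot\bar\beta$, $\bar\beta\cdot\bar\beta$) stay in $M$ trivially. Hence $M=R$.

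Finally I would transport this spanning set across $\bar\phi$. The map $\bar\phi$ sends $\mathcal A'$ onto $\mathcal A=\mathbb C[A]$ and sends $\{I,\bar\beta^2,\bar\beta,\bar\beta\bar\alpha\}$ onto $S=\{I,B^2,B,BA\}$, which by the Lemma immediately preceding this theorem is a \emph{free} basis of $\mathcal D(W)$ as a left $\mathcal A$-module. Given $\xi\in\ker\bar\phi$, write $\xi=p_1(\bar\alpha)I+p_2(\bar\alpha)\bar\beta^2+p_3(\bar\alpha)\bar\beta+p_4(\bar\alpha)\bar\beta\bar\alpha$ using the spanning just established. Applying $\bar\phi$ yields $p_1(A)I+p_2(A)B^2+p_3(A)B+p_4(A)BA=0$, and freeness of $S$ forces $p_j(A)=0$ in $\mathcal A$ for every $j$. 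Because $\Lambda_w(A)$ is, by \eqref{eigen}, a diagonal matrix whose entries are nonconstant polynomials in $w$, the powers of $A$ have eigenvalue polynomials of strictly increasing degree and are therefore linearly independent over $\mathbb C$; hence $A$ is transcendental, $\mathcal A\cong\mathbb C[t]$, and $p_j(A)=0$ implies $p_j=0$ as a polynomial. Thus each $p_j(\bar\alpha)=0$, so $\xi=0$, and $\bar\phi$ is an isomorphism.

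I expect the main obstacle to be the spanning step, namely confirming that the four relations \eqref{rell} really do suffice to straighten every monomial into the normal form over $\mathbb C[\bar\alpha]$. The reformulation as closure of $M$ under right multiplication by $\bar\alpha$ and $\bar\beta$ is what makes this tractable, reducing the verification to exactly four product computations; the only genuine risk is overlooking a product that escapes $M$, and the closure bookkeeping is precisely what guarantees no further straightening identity is secretly needed.
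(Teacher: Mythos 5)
Your proof is correct, and its skeleton matches the paper's: both obtain the surjection $T(V)/I(V)\to\mathcal D(W)$ from the universal property together with \eqref{rel}, both reduce injectivity to showing that $\{\mathbf I,\beta^2,\beta,\beta\alpha\}$ spans the quotient as a left module over the subalgebra $\mathcal A'$ generated by $\alpha$, and both then transport this spanning set onto the free basis $\{\mathbf I,B^2,B,BA\}$ of $\mathcal D(W)$ over $\mathcal A$ supplied by the lemma preceding the theorem. Where you genuinely diverge is in how the spanning is established. The paper closes $S$ under \emph{left} multiplication by $\beta$; since left multiplication by $\beta$ does not commute with the left $\mathcal A'$-action, this forces an induction on $k$ showing that $\beta\alpha^k m$ lies in $S$ for each of the four generators $m$. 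You instead close $M$ under \emph{right} multiplication by $\alpha$ and by $\beta$, and because right multiplication does commute with the left $\mathbb C[\bar\alpha]$-module structure, the whole verification collapses to exactly four one-line computations, one per relation in \eqref{rell}, with no induction at all; this is a shorter and less error-prone route to the same key fact. A further small gain: the paper simply asserts that ``it is clear that $\mathcal A'$ is isomorphic to $\mathcal A$,'' whereas your final step supplies the justification this requires, proving via the eigenvalues \eqref{eigen} that $A$ is transcendental over $\mathbb C$ (the entries of $\Lambda_w(A^k)$ are polynomials in $w$ of strictly increasing degree), which is exactly what lets you pass from $p_j(A)=0$ in $\mathcal D(W)$ to $p_j=0$ as a polynomial and hence to $\xi=0$ in the quotient. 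Both arguments rest on the same two external inputs, namely the generation of $\mathcal D(W)$ by $D_1+D_2$ and $D_3+D_4$ and the basis lemma for $\{\mathbf I,B^2,B,BA\}$, so your variant could replace the paper's proof essentially verbatim.
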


\begin{proof}
For simplicity we will denote with the same symbol an element $T(V)$ and its canonical projection on the quotient algebra $T(V )/I(V )$.
The homomorphism  $\xi$ from 
the quotient algebra $T(V )/I(V )$ 
into $\mathcal D(W)$ defined by $\xi(\alpha) = A$ and $\xi(\beta) = B$  is surjective. Let us denote by $\mathcal A'$ the subalgebra of $T(V)/I(P)$ generated by $\alpha$.  
It is clear that $\mathcal A'$ is isomorphic to $\mathcal A$.
The point now is to prove that $\xi$ is injective. Let $S$  denote the left-module over $\mathcal A$ generated by
$$\{\mathbf I, \beta^2, \beta,\beta\alpha\}.$$ 
Since $\xi$  transforms $S$ into the basis $\{\mathbf I, B^2, B,BA\}$ it is enough to prove that $S=T(V)/I(V)$. First notice that since $\alpha$ and $\beta$ satisfy \eqref{rell} we have that $S$ is invariant under right multiplication by $\alpha$ and left multiplication by $\beta$. If we prove that $S$ is also left invariant under multiplication by $\beta$ we obtain the statement of the theorem. 

We will prove by induction on $k$ that  $\beta\,\alpha^k\,(\alpha\beta)$ belongs to $S$ for any $k\in\NN_0$. For $k=0$ we have $\beta(\beta\alpha)=\beta^2\alpha=\alpha\beta^2$ (first equation in \eqref{rell}). 
For $k=1$ we have $\beta\alpha(\beta\alpha)$, but we can write $\beta\alpha\beta$ as a linear combination of $\beta^2$ and $\mathbf I$ (third equation in \eqref{rell}) which are in the center of the algebra, and then we can write $\beta\alpha\beta\alpha$ also as a  linear combination of  $\beta^2$ and $\mathbf I$.
Assume that $\beta\alpha^k(\beta\alpha)$ can be written as a linear combination of $\mathbf I$ and $\beta^2$ for $k\ge1$ and let us consider the case $k+1$; since $\beta\alpha\alpha$ can be written as $p(\alpha)\beta+q(\alpha)\beta\alpha$ with $p,q$ polynomials (last equation in \eqref{rell}), we have
$$
\beta\alpha^{k+1}\beta\alpha=(p(\alpha)\beta+q(\alpha)\beta\alpha)\alpha^{k-1}\beta\alpha=p(\alpha)\beta\alpha^{k-1}\beta\alpha +q(\alpha)\beta\alpha^{k}\beta\alpha,
$$
which, by the inductive hypothesis, is a linear combination of $\mathbf I$ and $\beta^2$. Then $\beta\,\alpha^k\,(\alpha\beta)$ belongs to $S$ for any $k\in\NN_0$. 

In a very similar way it can be proved that $\beta\,\alpha^k\,(\beta),\beta\,\alpha^k\,(\beta^2)$ and $\beta\,\alpha^k\,(\mathbf I)$ belongs to $S$ for any $k\in\NN_0$. Therefore, $S$ is stable under left multiplication by $\beta$ and the theorem is proved.
\end{proof}

Notice that if one has two similar weights $W$ and $\widetilde W$, then the corresponding differential operator algebras  are isomorphic: Let $M$ be an invertible matrix such that $\widetilde W=M W M^*$, it can be easily checked that $D\to MDM^{-1} $ is an isomorphism from $\mathcal D(W)$ onto $\mathcal D(\widetilde W)$.
The converse is not true. Lemma \ref{neq} states that for $(p,n)\ne(p',n')$ the weights $W_{p,n}$ and $W_{p',n'}$ are non-similar, but relations \eqref{rell} do not depend on the parameters $n$ and $p$. Thus, we have the following result.

\begin{thm}
For any two matrix weights $W=W_{p,n}$ and $W'=W_{p',n'}$  the algebras $\mathcal D(W)$ and $\mathcal D(W')$ are isomorphic.
\end{thm}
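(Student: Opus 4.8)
The plan is to leverage the presentation established in the immediately preceding theorem, namely that $\mathcal D(W)\cong T(V)/I(V)$, where $V$ is two-dimensional with basis $\{\alpha,\beta\}$ and $I(V)$ is the two-sided ideal of the tensor algebra generated by the four elements appearing in \eqref{rell}. The essential observation is that this presentation is uniform in the parameters: the defining relations \eqref{rell} contain no occurrence of $n$ or $p$. First I would apply the preceding theorem to the weight $W=W_{p,n}$ to obtain an isomorphism $\mathcal D(W_{p,n})\cong T(V)/I(V)$, and then apply it again to $W'=W_{p',n'}$ to obtain $\mathcal D(W_{p',n'})\cong T(V)/I(V)$. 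Since the target algebra $T(V)/I(V)$ is literally the same abstract algebra in both cases, composing one isomorphism with the inverse of the other yields the desired isomorphism $\mathcal D(W_{p,n})\cong\mathcal D(W_{p',n'})$, sending the class of $\alpha$ to the class of $\alpha$ and the class of $\beta$ to the class of $\beta$.

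The one point that genuinely requires attention is verifying that the relations \eqref{rel} among the generators $A=(D_1+D_2)/(2n-p)$ and $B=(D_3+D_4)/(2n-p)$ are truly parameter-free, despite the fact that the operators $D_1,\dots,D_4$ and the normalizing scalar $(2n-p)$ all depend on $(p,n)$. This is exactly what the display \eqref{rel} asserts, and it is precisely what makes the construction work; the verification reduces to the product table in Remark \ref{rem} together with the expressions \eqref{ds}, and the parameter dependence cancels after the normalization. Once this is granted, the ideal $I(V)$ is manifestly the same for every admissible $(p,n)$, so no further computation is needed for the theorem itself.

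The main subtlety to emphasize, rather than a genuine obstacle, is conceptual. By Lemma \ref{neq} the weights $W_{p,n}$ are pairwise non-similar for distinct parameters, so these algebra isomorphisms do \emph{not} arise from a conjugation $D\mapsto MDM^{-1}$ by a single constant matrix $M$, as they would in the similar case discussed just before the statement. The isomorphism produced here is purely at the level of the abstract presentation, and under the eigenvalue map $\Delta$ it will in general fail to correspond to any matrix conjugation, which is fully consistent with non-similarity. Thus the content of the theorem is that algebraic structure of $\mathcal D(W)$ is a strictly coarser invariant than the similarity class of the weight: all the Gegenbauer weights $W_{p,n}$ share one and the same differential-operator algebra up to isomorphism.
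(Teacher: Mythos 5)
Your proposal is correct and is essentially the paper's own argument: the paper proves the theorem by noting that, since the presentation $\mathcal D(W)\cong T(V)/I(V)$ holds for every admissible $(p,n)$ and the defining relations \eqref{rell} involve neither $n$ nor $p$, all these algebras are isomorphic to one and the same quotient of the tensor algebra. Your additional remarks on the parameter cancellation after normalization and on the contrast with non-similarity (Lemma \ref{neq}) match the discussion the paper gives immediately before the theorem.
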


\begin{thm}
The algebra $\mathcal D(W)$ is isomorphic to the algebra $\mathcal D(\tilde W)$  studied in \cite{T11},
with $\tilde W=e^{-t^2}e^{tM}e^{tM^*}$ where $$M=\left(\begin{matrix}
0&a\\0&0
\end{matrix}\right), \quad a\in\mathbb C\setminus\{0\},
$$ 
is isomorphic to the algebra $\mathcal D(W)$ considered in the present paper.
\end{thm}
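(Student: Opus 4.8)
The plan is to reduce the statement to the generator–relation presentation already obtained. We have shown that $\mathcal D(W)\cong T(V)/I(V)$, the algebra on two generators $\alpha,\beta$ subject to the four relations \eqref{rell}, and that this presentation is \emph{independent} of the parameters $n,p$. It therefore suffices to produce the same presentation for the Hermite-type algebra $\mathcal D(\tilde W)$: if we can exhibit two generators of $\mathcal D(\tilde W)$ satisfying exactly \eqref{rell} and no further relations, then both algebras are isomorphic to $T(V)/I(V)$, and hence to each other.

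First I would collect from \cite{T11} the structural data of $\mathcal D(\tilde W)$, all of which parallel what we used here. Tirao exhibits two symmetric second-order operators generating the algebra; the order–reversing involution $D\mapsto\widetilde D$ of Proposition \ref{L} and the automorphism theorem quoted above are in fact taken from \cite{T11}, so $\mathcal D(\tilde W)$ carries the same involution; and the analogue of Corollary \ref{basis} presents $\mathcal D(\tilde W)$ as a free left module of rank four over the subalgebra generated by the sum of the two ``diagonal'' second-order operators. The purpose of assembling these ingredients is that they match, item by item, those used to establish $\mathcal D(W)\cong T(V)/I(V)$.

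The core of the argument is an explicit change of generators. Writing $\tilde A,\tilde B$ for the suitably normalized symmetric and antisymmetric combinations of Tirao's two generators, in exact analogy with our passage from $D_1,D_2,D_3,D_4$ to $A=(D_1+D_2)/(2n-p)$ and $B=(D_3+D_4)/(2n-p)$ via Remark \ref{rem}, I would verify that $\tilde A,\tilde B$ satisfy the four relations \eqref{rell}. By the universal property of the tensor algebra, $\alpha\mapsto\tilde A$, $\beta\mapsto\tilde B$ then descends to a homomorphism $\eta\colon T(V)/I(V)\to\mathcal D(\tilde W)$, which is surjective because $\tilde A,\tilde B$ generate $\mathcal D(\tilde W)$. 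For injectivity I would argue exactly as in the proof of $\mathcal D(W)\cong T(V)/I(V)$: the set $\{\mathbf I,\beta^2,\beta,\beta\alpha\}$ spans $T(V)/I(V)$ as a free rank-four left module over the polynomial subalgebra $\mathcal A'$ generated by $\alpha$, while $\mathcal D(\tilde W)$ is free of rank four over the polynomial subalgebra generated by $\tilde A$; since $\eta$ restricts to an isomorphism of these one-generator base rings and carries the four-element basis onto a four-element generating set, it is an isomorphism of modules, whence $\ker\eta=0$. Composing $\eta$ with the inverse of $\mathcal D(W)\cong T(V)/I(V)$ gives the desired isomorphism $\mathcal D(W)\cong\mathcal D(\tilde W)$.

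I expect the main obstacle to be pinning down the correct normalization of $\tilde A,\tilde B$ so that the relations emerge as precisely \eqref{rell} rather than an affinely twisted variant. Because the Hermite eigenvalues in \cite{T11} differ from the Gegenbauer eigenvalues \eqref{eigen}, one may need to rescale Tirao's generators and add scalar multiples of $\mathbf I$ before all four relations align simultaneously; getting these four relations to hold at once, rather than just one or two, is the delicate point. Once this dictionary is fixed, the verification of \eqref{rell} for $\tilde A,\tilde B$ and the module-basis bookkeeping for injectivity are routine and follow the pattern already established in this section.
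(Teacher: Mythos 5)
Your overall strategy---exhibit generators $\tilde A,\tilde B$ of $\mathcal D(\tilde W)$ satisfying exactly the relations \eqref{rell}, so that both algebras become isomorphic to $T(V)/I(V)$---is sound and close in spirit to the paper's proof. But the proposal has a genuine gap at precisely the step you yourself flag as ``the delicate point'': producing $\tilde A,\tilde B$. You confine the search to ``symmetric and antisymmetric combinations of Tirao's two generators,'' refined at most by rescaling and adding scalar multiples of $\mathbf I$; that is, to the linear span of $\mathbf I$, $E$, $F$. That search space is too small. The paper's proof writes the isomorphism explicitly as
$$\varphi(A)= E +2\mathbf I/|a|^2, \qquad \varphi(B)=\bigl(F(4+|a|^4)+[E,F](4-|a|^4)\bigr)/(8|a|^4),$$
so the generator playing the role of $B$ is a linear combination of $F$ and the \emph{commutator} $[E,F]$, with a nonvanishing commutator coefficient whenever $|a|^4\neq 4$. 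Your contingency plan (rescale and shift by multiples of $\mathbf I$) can never reach such an element, so the verification step in your plan would fail for every candidate you allow. This is not a cosmetic normalization issue: Tirao's presentation consists of six relations, equations (109)--(114) of \cite{T11}, which depend on the parameter $a$ and involve $[E,F]$ essentially, and there is no ready-made pair in $\mathcal D(\tilde W)$ playing the roles that $D_1+D_2$ and $D_3+D_4$ play here. Finding the dictionary with the commutator term \emph{is} the content of the theorem; deferring it leaves the proof without its core.

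There is also a secondary gap in your injectivity argument. The paper avoids module theory entirely by exhibiting the inverse on Tirao's generators, $E\mapsto A-2\mathbf I/|a|^2$ and $F\mapsto (4+|a|^4)B+(4-|a|^4)[B,A]$, and checking that both assignments preserve the respective sets of relations, so that the two maps are mutually inverse by inspection on generators. Your route instead needs $\{\mathbf I,\tilde B^2,\tilde B,\tilde B\tilde A\}$ to be a basis of $\mathcal D(\tilde W)$ as a free left module over the polynomial algebra generated by $\tilde A$. That statement is not quotable from \cite{T11}: whatever freeness results hold there are phrased for Tirao's own operators, and transferring them to the (still undetermined, commutator-involving) pair $\tilde A,\tilde B$ requires an argument of the same kind as the Lemma preceding \eqref{rel}. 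This part is repairable once the correct $\tilde A,\tilde B$ are in hand, but as written the proposal postpones, rather than solves, both steps on which the theorem rests.
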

\begin{proof}
In \cite{T11} it is proved that the algebra $\mathcal D(\tilde W)$ is the complex algebra generated by two elements $E$, $F$ subject to certain relations given by equations (109)--(114) (page 321 in \cite{T11}). We observe that equations (110) and (112) in \cite{T11} have minor typos, they actually should be:
\begin{align*}
 {F}^{4}/16 - {a}^{2}{F}^{2}  {E}^{2}/2-2 a{F}^{2}  E-2 {
F}^{2}+{a}^{4}{E}^{4}+8 {a}^{3}{E}^{3}-{a}^{2} \left( {a}^{2}-24
 \right) {E}^{2}-4 a \left( {a}^{2}-8 \right) E-4  \left( {a}^{2}-4
 \right) {\mathbf I}&=0,
\\
{F}^{3}+{F}^{2}  [E,F]-4 {a}^{2}{E}^{2}  F-4 {a}^{2}{E}^{
2}  [E,F]+4 a \left( a-4 \right) E  F+&\\4 a \left( a-4
 \right) E  [E,F]+8  \left( a-2 \right) F+8  \left( a-2
 \right) [E,F]&=0,
\end{align*}
respectively.

Let us consider the algebra morphism $\varphi:\mathcal D(W)\to\mathcal D(\tilde W)$ defined by 
$$\varphi(A)= E +2\mathbf I/|a|^2, \quad \varphi(B)=(F(4+|a|^4)+[E,F](4-|a|^4))/(8|a|^4),$$
and the algebra morphism $\xi:\mathcal D(W)\to\mathcal D(\tilde W)$ defined by
$$\xi(E){=} A-2\mathbf I/|a|^2 , \quad \xi(F) {=} (4+|a|^4)B+(4-|a|^4)[B,A]
,$$

Using \eqref{rel} and (109)--(114) (from \cite{T11}), it is straightforward to see that $\varphi$ and $\xi $ are well defined and that $\varphi$ is an isomorphism with inverse $\xi$.
\end{proof}

\section{The center of $\mathcal D(W)$}\label{Cen}
In this section we determine the center of our algebra and we will analyze the structure of  $\mathcal D(W)$ as a module over its center.

Let us define
\begin{align*}
  C_1=(D_3+D_4)^2, && {C_2}=D_3D_1D_4+D_4\left(D_2-(n-2p)\mathbf I\right)D_3.
\end{align*}
From Lemma \ref{PZ} it follows that 
\begin{equation}\label{c1c2}
\begin{split}
 \Lambda_w(C_1)=
&p_{1}(w)\mathbf I=
 (w+p)(w+p+1)(w+n-p+1)(w+n-p)\mathbf I,
\\
 \Lambda_w({C_2})=
&p_{2}(w)\mathbf I=
\left( w+p \right)  \left( w+p+1 \right) ^{2} \left( w+n-p+1 \right) 
 \left( w+n-p \right) ^{2}
\mathbf I,
\end{split}
\end{equation}
 whence it is clear that ${C_1}$ and ${C_2}$ are in the center of $\mathcal D(W)$. Also, we observe that they are not algebraically independent since the following relations hold:
$$
C_1^3-C_2^2=(n-2p)\,C_1\,C_2.
$$
\begin{thm}
 The center $\mathcal {Z}(W)$ of  $\mathcal D(W)$ is generated by ${C_1}$ and ${C_2}$ and is isomorphic to the affine algebra of the following singular algebraic curve:
$$
x^3-y^2=(n-2p)\,x\,y.
$$
\end{thm}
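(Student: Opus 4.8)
\emph{Setup and Step 1 (the centre is the set of operators with scalar eigenvalues).} Throughout I write $a=a(w)=(w+p)(w+n-p+1)$ and $b=b(w)=(w+p+1)(w+n-p)$ for the two diagonal entries appearing in $\Lambda_w(D_1)$ and $\Lambda_w(D_2)$ in \eqref{eigen}; note $b=a+(n-2p)$. From \eqref{c1c2} one has $\Lambda_w(C_1)=p_1(w)\mathbf I$ and $\Lambda_w(C_2)=p_2(w)\mathbf I$ with $p_1=ab$ and $p_2=a\,p_1$, so that the relation $C_1^3-C_2^2=(n-2p)C_1C_2$ displayed above is the polynomial identity $p_1^3-p_2^2=(n-2p)\,p_1p_2$. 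I would first observe that for every $w\ge0$ the four matrices $\Lambda_w(D_1),\dots,\Lambda_w(D_4)$ of \eqref{eigen} form a basis of $M_2$, since the scalars $(w+p)(w+n-p+1)$, $(w+p+1)(w+n-p)$, $(w+p)(w+p+1)$ and $(w+n-p)(w+n-p+1)$ are all nonzero when $0<p<n/2$ and $w\ge0$. Hence each $\Lambda_w\colon\mathcal D(W)\to M_2$ is onto. Consequently, if $Z$ is central then $\Lambda_w(Z)$ commutes with all of $M_2$, so $\Lambda_w(Z)=\lambda_Z(w)\mathbf I$ is scalar for every $w$; conversely, if $\Lambda_w(Z)$ is scalar for all $w$ then $\Lambda_w(ZD)=\Lambda_w(DZ)$ for every $D$, and injectivity of $\Delta=\prod_w\Lambda_w$ forces $Z$ to be central. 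Since $w\mapsto\Lambda_w(Z)$ is polynomial by Proposition~\ref{22}, this identifies $\mathcal Z(W)$ faithfully, via $Z\mapsto\lambda_Z$, with a subalgebra of $\mathbb C[w]$ containing $p_1$ and $p_2$.

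\emph{Step 2 (the image is exactly $\mathbb C[p_1,p_2]$).} Using the spanning set of Corollary~\ref{vector} I would compute $\Lambda_w$ on each generator: the $(1,1)$ and $(2,2)$ entries of any $\Delta(D)$ lie in $\mathbb C[a]$ and $\mathbb C[b]$ respectively and share a common constant term (contributed by $\mathbf I$), while the off-diagonal entries are multiples of $(w+p)(w+p+1)$ and $(w+n-p)(w+n-p+1)$. For a central $Z$ the off-diagonal entries vanish and the two diagonal entries agree, i.e.\ $g(a)=h(b)=h(a+(n-2p))$ for polynomials $g,h$ of zero constant term. Because $n-2p\ne0$, the roots $0$ and $-(n-2p)$ of $p_1=a(a+(n-2p))$ are distinct, which forces $g$ to be divisible by $p_1$; thus $\lambda_Z\in\mathbb C+p_1\,\mathbb C[a]$. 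A short induction using $a^2=p_1-(n-2p)a$, together with a count of dimensions in each degree, then gives $\mathbb C+p_1\,\mathbb C[a]=\mathbb C[p_1,p_2]$. Combined with Step 1 this yields $\mathcal Z(W)=\langle C_1,C_2\rangle$.

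\emph{Step 3 (identification with the curve).} The assignment $x\mapsto C_1$, $y\mapsto C_2$ gives a surjection $\mathbb C[x,y]\to\mathcal Z(W)$ whose kernel contains $f=x^3-y^2-(n-2p)xy$ by the identity of Step 1. To see the kernel is exactly $(f)$, I would use that $f$ is monic of degree two in $y$ up to the unit $-1$, so $\mathbb C[x,y]/(f)$ is free of rank two over $\mathbb C[x]$ with basis $\{1,y\}$. Under $\Delta$ the subring $\mathbb C[x]$ maps isomorphically onto $\mathbb C[p_1]$ (as $p_1$ is a nonconstant, hence transcendental, element of $\mathbb C[a]$), and $1,p_2$ are $\mathbb C[p_1]$-independent in $\mathbb C[a]$ because any relation $u(p_1)+v(p_1)p_2=0$ is impossible: $u(p_1)$ has even degree in $a$ while $v(p_1)p_2$ has odd degree. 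Hence the surjection is injective, giving $\mathcal Z(W)\cong\mathbb C[x,y]/\bigl(x^3-y^2-(n-2p)xy\bigr)$, the affine algebra of the stated singular rational curve.

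\emph{Main obstacle.} The essential content is Step 2, namely that $C_1$ and $C_2$ already exhaust the centre; this is where the explicit description of the image $\Delta(\mathcal D(W))$ from Corollary~\ref{vector} and the degree bookkeeping are needed. By contrast Step 1 is immediate once surjectivity of the $\Lambda_w$ is noted, and Step 3 is routine commutative algebra. It is worth emphasising that the hypothesis $0<p<n/2$ enters precisely through $n-2p\ne0$: it guarantees both the surjectivity used in Step 1 and the distinctness of the two roots of $p_1$ used in Step 2, and it is also exactly the condition under which the resulting curve is singular.
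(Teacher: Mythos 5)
Your proof is correct, and it reaches the theorem by a genuinely different route than the paper. The paper argues by induction on the order $s$ of a central element $X$: it shows $\Lambda_w(X)=p_X(w)\mathbf I$ by commuting $\Lambda_w(X)$ with $\Lambda_w(D_1)$ and $\Lambda_w(D_3)$, invokes Corollary \ref{cor} and Theorem \ref{TT} to get $\deg p_X=s$ even, rules out $s=2$ via Lemma \ref{PZ}, and for $s\ge 4$ writes $s=4s_1+6s_2$ with $s_2\in\{0,1\}$ so that subtracting a suitable multiple of $C_1^{s_1}C_2^{s_2}$ lowers the order; the identification with the affine algebra of the curve is then essentially delegated to Corollary \ref{poly}, whose proof is not spelled out. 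You instead compute $\Delta$ on the spanning set of Corollary \ref{vector}, characterize $\Delta(\mathcal Z(W))$ as $\mathbb C+p_1\mathbb C[a]$ by the two-root divisibility argument (this is where $n-2p\ne 0$ genuinely enters), show $\mathbb C+p_1\mathbb C[a]=\mathbb C[p_1,p_2]$ using $a^2=p_1-(n-2p)a$, and then prove the curve isomorphism directly from freeness of $\mathbb C[x,y]/(f)$ over $\mathbb C[x]$ plus the even/odd degree count. What your route buys: it bypasses the order-parity bookkeeping (Corollary \ref{cor}, evenness of $s$), it proves \emph{both} directions of ``central iff scalar eigenvalues'' cleanly via surjectivity of each $\Lambda_w$ (the paper leaves the converse, needed to know $C_1,C_2$ are central, implicit), and it makes the kernel computation behind Corollary \ref{poly} explicit. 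What the paper's route buys: brevity, by leaning on the structural results of Section \ref{Ord}.

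Two small remarks, neither affecting validity. First, you cite \eqref{c1c2} as giving $p_2=a\,p_1$, whereas the formula displayed there reads $p_2=(w+p)(w+p+1)^2(w+n-p+1)(w+n-p)^2=b\,p_1$. A direct computation from $C_2=D_3D_1D_4+D_4(D_2-(n-2p)\mathbf I)D_3$ and \eqref{eigen} gives $\Lambda_w(C_2)=acd\,\mathbf I=a\,p_1\,\mathbf I$, with $c=(w+p)(w+p+1)$, $d=(w+n-p)(w+n-p+1)$, and only the reading $p_2=a\,p_1$ is compatible with the relation $C_1^3-C_2^2=(n-2p)C_1C_2$; so your version is the correct one (the paper's display appears to have the exponents swapped), and in any case $a\,p_1$ and $b\,p_1$ differ by $(n-2p)p_1$, so neither the generation statement nor the curve changes. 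Second, your closing claim that $n-2p\ne0$ is ``exactly the condition under which the resulting curve is singular'' is not right: the curve $x^3-y^2=(n-2p)xy$ is singular at the origin for every value of the parameter (a node when $n-2p\ne0$, a cusp when $n-2p=0$); likewise, the surjectivity of $\Lambda_w$ in your Step 1 needs only $0<p<n$, not $n-2p\ne 0$. The true role of $n-2p\ne0$ is the distinctness of the two roots of $p_1$ in your Step 2.
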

\begin{proof}
Let us assume that $X$ is an element in $\mathcal {Z}(W)\subset\mathcal D(W)$, let $s$ be the order of $X$. 
Since $\Lambda_w(X)\Lambda_w(D_1) = \Lambda_w(D_1)\Lambda_w(X)$ we have that $\Lambda_w(X)$ is a diagonal matrix for every $w$ (see Theorem \ref{PZ} for the expression of $\Lambda_w(D_1)$). From $\Lambda_w(X)\Lambda_w(D_3)=\Lambda_w(D_3)\Lambda_w(X)$ we obtain that $\Lambda_w(X)$ is a scalar matrix, i.e.
$$\Lambda_w(X)=p_X(w)\mathbf I$$
with $p_X(w)$ a polynomial on $w$. From Corollary \ref{cor} we have that the degree of $p_X$ is equal to the order of $X$, which has to be an even number $s$ (Theorem \ref{TT}). If $s=0$ there is nothing to prove; it is clear that $s\ne2$ since all the operators of order $2$ in $\mathcal D(W)$ are a linear combination of $\{D_1,D_2,D_3,D_4,\mathbf I\}$ (see Theorem \ref{PZ}). We will complete the proof by induction on the even number $s$: assume that $s\ge4$, since the polynomials $p_{1}$ and $p_{2}$ are of degree $4$ and $6$ respectively we can find  integers $s_1\in\NN_0$ and $s_2\in\{0,1\}$ such that the polynomial $p_{\widetilde X}=p_X-p_{1}^{s_1}p_{2}^{s_2}$ is of degree smaller than $s$. Therefore, by the inductive hypothesis, the operator $ \widetilde X=X-{C_1}^{s_1}{C_2}^{s_2}$ is a polynomial in ${C_1}$ and ${C_2}$ and hence also the operator $X$.
 \end{proof}

\begin{cor}\label{poly}
For every operator  $D\in\mathcal Z(W)$ there exists a unique pair of polynomials $p$ and $q$ such that
$$
D=p(C_1)+q(C_1)C_2.
$$
\end{cor}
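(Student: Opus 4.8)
The plan is to show that $\{\mathbf I, C_2\}$ is a free basis of $\mathcal Z(W)$ regarded as a module over the polynomial subalgebra $\CC[C_1]$: existence of the decomposition is the spanning statement, and uniqueness is the freeness statement.

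For existence, I would start from the theorem just proved, which says that $\mathcal Z(W)$ is generated as a $\CC$-algebra by the two commuting elements $C_1$ and $C_2$. Hence every $D\in\mathcal Z(W)$ is a $\CC$-linear combination of monomials $C_1^aC_2^b$. The relation $C_1^3-C_2^2=(n-2p)\,C_1\,C_2$ rewrites as $C_2^2=C_1^3-(n-2p)\,C_1\,C_2$, whose right-hand side has degree at most one in $C_2$. Substituting this identity repeatedly lowers the $C_2$-degree of any monomial, so an easy induction on $b$ shows that every monomial, and therefore $D$, can be written as $p(C_1)+q(C_1)C_2$ for suitable polynomials $p,q$. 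This settles existence.

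For uniqueness I would pass to the eigenvalue representation. By \eqref{c1c2} we have $\Lambda_w(C_1)=p_1(w)\mathbf I$ and $\Lambda_w(C_2)=p_2(w)\mathbf I$, with $\deg p_1=4$ and $\deg p_2=6$, both monic in $w$. Suppose $p(C_1)+q(C_1)C_2=0$. Applying $\Lambda_w$ and using that the family $\{\Lambda_w\}_{w\ge0}$ separates the elements of $\mathcal D(W)$ yields the scalar identity $p(p_1(w))+q(p_1(w))\,p_2(w)=0$ for all $w\ge0$, hence identically in $\CC[w]$. The decisive point is a degree count: if $p\ne0$ the first summand has $w$-degree $4\deg p\equiv0\pmod4$, while if $q\ne0$ the second has $w$-degree $4\deg q+6\equiv2\pmod4$. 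These two degrees lie in different residue classes modulo $4$, so they can never coincide; since a sum of two nonzero polynomials of distinct degrees is nonzero (and the conclusion is immediate if exactly one of $p,q$ vanishes), we must have $p=q=0$. This gives uniqueness.

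I expect the degree-parity argument in the uniqueness step to be the crux: it is precisely what rules out cancellation between the even-order contribution of $C_1$ and the extra order-$6$ contribution of $C_2$, and it relies on the specific orders $4$ and $6$ recorded in \eqref{c1c2}. (That $\CC[C_1]$ is genuinely a polynomial ring, which is what makes $\deg p$ meaningful, follows from the same separation property, since $p_1(w)$ is non-constant and so $p(C_1)=0$ forces $p$ to have infinitely many roots.)
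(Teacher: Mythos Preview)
Your argument is correct. The paper states this corollary without proof, treating it as an immediate consequence of the preceding theorem that $\mathcal Z(W)$ is generated by $C_1,C_2$ subject to $C_1^3-C_2^2=(n-2p)C_1C_2$; your write-up supplies exactly the details one would fill in. Your existence step (reducing the $C_2$-degree via $C_2^2=C_1^3-(n-2p)C_1C_2$) is the algebraic counterpart of the inductive subtraction of $C_1^{s_1}C_2^{s_2}$ with $s_2\in\{0,1\}$ carried out in the proof of Theorem~\ref{az}'s predecessor. Your uniqueness step via the eigenvalue map and the residue of the $w$-degree modulo $4$ is clean and in the spirit of the paper's repeated use of the faithful representation $D\mapsto(\Lambda_w(D))_w$; it also justifies the injectivity implicit in the theorem's isomorphism claim. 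One tiny remark: in deducing $p(p_1(w))+q(p_1(w))p_2(w)=0$ from $p(C_1)+q(C_1)C_2=0$ you only need that each $\Lambda_w$ is a ring homomorphism, not the separating property (the latter would be needed for the converse implication).
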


%

\begin{thm}\label{az}
 The algebra $\mathcal D(W)$ is a finitely generated torsion-free module over the ring $\mathcal Z(W)$. It is not flat and hence is not projective. 
\end{thm}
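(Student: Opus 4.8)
The plan is to establish each property in turn, exploiting the explicit module structure over $\mathcal Z(W)$ that the preceding results provide. First I would prove finite generation. From Corollary \ref{vector} we have a spanning set $\{(D_1+D_2)^iD_j : i \in \NN_0, j\}$ together with $\mathbf I$, and since $D_1+D_2 = (2n-p)A$ while $C_1 = (D_3+D_4)^2 = (2n-p)^2 B^2$, the center contains enough powers of $A^2$-type elements to absorb the index $i$. Concretely, using the relation $B^2 = A^2 + A - 2(n-2p)^{-1}D_2$ from the proof of the preceding lemma, together with Remark \ref{rem}, I would show that the finitely many elements $\{\mathbf I, A, B, BA\}$ (equivalently $\{\mathbf I, D_2, D_3, D_4\}$) generate $\mathcal D(W)$ as a $\mathcal Z(W)$-module, since even powers $(D_1+D_2)^{2i}$ are expressible through $C_1$ up to lower-order central corrections. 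This mirrors the free-module statement of Corollary \ref{basis}, but now over the larger ring $\mathcal Z(W)$ rather than over the polynomial algebra generated by $D_1+D_2$.

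Next I would verify torsion-freeness. Because the eigenvalue map $\Delta$ is injective (Proposition following \eqref{D(W)}) and sends each $D$ to the polynomial $\Lambda_w(D)$ in $w$, the whole algebra embeds into a ring of $M_2$-valued polynomials in $w$, which has no zero-divisors acting on the scalar central elements: if $Z \cdot D = 0$ with $Z \in \mathcal Z(W)$ nonzero, then $\Lambda_w(Z)\Lambda_w(D) = p_Z(w)\Lambda_w(D) = 0$ for all $w$, and since $p_Z$ is a nonzero polynomial it vanishes only finitely often, forcing $\Lambda_w(D) = 0$ for all $w$ and hence $D = 0$. Thus $\mathcal D(W)$ is torsion-free over $\mathcal Z(W)$.

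The substantive part, and the main obstacle, is showing non-flatness. Here the singularity of the center is essential: $\mathcal Z(W)$ is the coordinate ring of the cuspidal-type curve $x^3 - y^2 = (n-2p)xy$, which is not a regular ring, so finitely generated torsion-free modules need not be flat. The plan is to exhibit the failure of flatness at the singular point of $\operatorname{Spec}\mathcal Z(W)$ (the origin, where the curve is singular) by producing a short exact sequence of $\mathcal Z(W)$-modules whose tensor product with $\mathcal D(W)$ fails to stay exact — equivalently, by computing a nonvanishing $\operatorname{Tor}_1^{\mathcal Z(W)}(\mathcal D(W), k)$ for the residue field $k$ at the singular point. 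A clean route is to localize at the singular maximal ideal and compare the minimal number of module generators with what flatness (local freeness, since the module is finitely generated over a Noetherian ring) would force given the rank computed at the generic point; the rank of $\mathcal D(W)$ over the fraction field of $\mathcal Z(W)$ is $4$ (from the basis $\{\mathbf I, D_2, D_3, D_4\}$ of Corollary \ref{basis}, now over the field of fractions), so flatness would require exactly $4$ local generators at every point, whereas at the singular point the Nakayama-reduced generator count should exceed $4$. The delicate step is carrying out this local generator count honestly: one must present $\mathcal D(W)$ over the local ring $\mathcal Z(W)_{\mathfrak m}$ and show that the relations coming from \eqref{rel}, once the central elements $C_1, C_2$ are inverted or sent into $\mathfrak m$, no longer suffice to cut the generating set down to four. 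Once non-flatness is in hand, non-projectivity is immediate, since over a commutative ring every finitely generated projective module is flat.
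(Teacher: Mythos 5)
Your torsion-freeness argument is fine (it is essentially the paper's), but the other two parts have genuine problems. The fatal one is the finite-generation step. Corollary \ref{basis} is a statement about the module structure over the subalgebra $\mathbb C[D_1+D_2]$, and $\mathcal Z(W)$ is \emph{not} ``the larger ring'': by Remark \ref{rem} the operator $D_1+D_2$ does not commute with $D_3$, so $\mathbb C[D_1+D_2]\not\subset\mathcal Z(W)$, and the two subalgebras are simply incomparable, so nothing transfers. In fact the set $\{\mathbf I,D_2,D_3,D_4\}$ does \emph{not} generate $\mathcal D(W)$ as a $\mathcal Z(W)$-module: the element $D_2D_3=(D_1+D_2)D_3$ lies in the $\mathbb C[D_1+D_2]$-span of $D_3$ but not in the $\mathcal Z(W)$-span of $\{\mathbf I,D_2,D_3,D_4\}$. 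Indeed, comparing $(2,1)$ entries of eigenvalues in a putative relation $D_2D_3=z_0\mathbf I+z_2D_2+z_3D_3+z_4D_4$ with $z_i\in\mathcal Z(W)$ forces $\Lambda_w(z_3)=(w+p+1)(w+n-p)\,\mathbf I$, i.e.\ $z_3$ would be a nonconstant central operator of order $2$ (Corollary \ref{cor}); but by Lemma \ref{PZ} the only central operators of order at most $2$ are the multiples of $\mathbf I$. Worse, your claim is incompatible with the very theorem you are proving: if four elements generated, the fiber dimension of $\mathcal D(W)$ at every maximal ideal of $\mathcal Z(W)$ would be at most $4$, hence (being at least the generic rank $4$, which you correctly identify) constantly equal to $4$; a finitely generated module of constant fiber rank over the reduced Noetherian ring $\mathcal Z(W)$ is locally free, hence flat and projective --- the opposite conclusion. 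A correct generating set must be larger, e.g.\ $\{\mathbf I,D_1,D_2,D_3,D_4,D_2D_3,D_1D_4\}$; the paper itself only claims finite generation (over the subalgebra generated by $C_1$, via Corollary \ref{vector}), never a four-element generating set.

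The second problem is that the non-flatness part --- the substance of the theorem --- is a plan rather than a proof: the assertion that ``the Nakayama-reduced generator count should exceed $4$'' at $\mathfrak m=(C_1,C_2)$ is precisely what must be established, and you explicitly defer it. The strategy is viable: one can check that the classes of $\mathbf I,D_1,D_2,D_3,D_4,D_2D_3,D_1D_4$ are linearly independent in $\mathcal D(W)/\mathfrak m\mathcal D(W)$, so the minimal number of local generators is $7>4$ and $\mathcal D(W)_{\mathfrak m}$ cannot be free; but none of this computation appears in your text, and it is exactly where the singularity of the curve enters. For comparison, the paper avoids fiber-dimension counting altogether: it assumes a basis of $\mathcal D(W)_{\mathfrak m}$ over $\mathcal Z(W)_{\mathfrak m}$ exists, invokes the single relation $C_1\,D_2D_3=C_2\,D_3$ (immediate from \eqref{eigen} and \eqref{c1c2}), expands $D_3$ and $D_2D_3$ in the putative basis, and derives a contradiction from the curve relation between $C_1$ and $C_2$ together with the normal form of Corollary \ref{poly}. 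Note that this same relation is the conceptual reason your four-element set fails: $D_2D_3$ becomes a central multiple of $D_3$ only after inverting $C_1$, which is exactly what one cannot do at the singular point.
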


\begin{proof}
Torsion free is clear since for every operator $D$ in $\mathcal Z(W)$ the eigenvalue $\Lambda_w( D)$ is a scalar matrix.
From Corollary \ref{vector}, since $C_1=(D_3+D_4)^2=(D_1+D_2)^2+(n-2p)(D_1-D_2)$ (see Remark \ref{rem}), it is clear that $\mathcal D(W)$ is finitely generated over the subalgebra generated by $C_1$ and therefore by $\mathcal Z(W)$.
Given that the center $\mathcal Z(W)$ is a Noetherian ring,  $\mathcal D(W)$ is projective (resp. flat) if and only if it is locally free.

We will prove that $\mathcal D(W)$ is not locally free. Let $\mathfrak m$ the maximal ideal generated by $C_1$ and $C_2$, then the complement $S$ are the operators of the form $p(C_1)+q(C_1)C_2$ with $p$ and $q$ polynomials such that the constant term of $p$ is nonzero; i.e. $S$ is the set of  operators $D$ in  $\mathcal Z(W)$ such that the polynomial $\Lambda_w(C_1)$ does not divides $\Lambda_w(D)$ (recall from \eqref{c1c2} that $\Lambda_w(C_1)$ divides $\Lambda_w(C_2)$).

Let us assume that $\{A_i\}_{j\in J}$ is a basis of the localization of $\mathcal D(W)$ with respect to $\mathfrak m$, called $\mathcal D(W)_{\mathfrak m}$, over the ring $\mathcal Z(W)_{\mathfrak m}$, the localization of $\mathcal Z(W)$ by $\mathfrak m$ . Therefore, 
\begin{equation}\label{d2d3}
 D_3=\sum_{j\in J}C_{1,j}A_j\,,\qquad  D_2D_3= \sum_{j\in J}C_{2,j}A_j,
\end{equation}
with $C_{i,j}$ of the form
$$C_{i,j}=\frac{p_{i,j}(C_1)+q_{i,j}(C_1)C_2}{p'_{i,j}(C_1)+q'_{i,j}(C_1)C_2},$$
with $p_{i,j},q_{i,j},p'_{i,j},q'_{i,j}$ polynomials  for all $i=1,2$ and $j\in J$, such that the constant coefficient of $p'_{i,j}$ is nonzero.

Notice that there exists $j_0$ such that $p_{2,j_0}$ has a nonzero constant term, in other words $p_{j_0}\in S$. By considering the eigenvalues in \eqref{d2d3} we have 
\begin{multline*}
\Lambda_w(D_2)\Lambda_w(D_3)\prod_{j\in J} (p'_{2,j}(\Lambda_w(C_1))+q'_{2,j}(\Lambda_w(C_1))\Lambda_w(C_2)) \\
= \sum_{j\in J}
\left(\prod_{k\ne j} (p'_{2,k}(\Lambda_w(C_1))+q'_{2,k}(\Lambda_w(C_1))\Lambda_w(C_2)) \right)
\Big(p_{2,j}(\Lambda_w(C_1))+q_{2,j}(\Lambda_w(C_1))\Lambda_w(C_2))\Big)
\Lambda_w(A_j),
\end{multline*}
if $p_{2,j}$ had a zero constant term for every $j$ then
we would have that the polynomial $w+n-p+1$ divides the right-hand side of the equation above since it divides $\Lambda_w(C_1)$ and $\Lambda_w(C_2)$, hence we would have that it also divides the left-hand side, which is clearly false (see \eqref{eigen}.

Given that 
$$C_1 D_2 D_3-C_2 D_3=0 ,$$
we have 
$$C_1 \sum_{j\in J}C_{2,j}A_j-C_2 \sum_{j\in J}C_{1,j}A_j=0, $$
in particular 
$$
C_1 C_{2,j_0}\hfill-C_2 C_{1,j_0}=0.
$$
Now, we only work out the last identity. Denoting $p_{i,j},q_{i,j},p'_{i,j},q'_{i,j}$ the respective polynomials given by $p_{i,j}(C_1),q_{i,j}(C_1),p'_{i,j}(C_1),q'_{i,j}(C_1)$ (in order to simplify the notation in this part of the proof) and using that $C_2^2=C_1^3+(2n-p)C_1C_2$ we have
\begin{flalign*}
C_1 C_{2,j_0}\hfill-C_2 C_{1,j_0}=&0, \\
C_1(p_{2,j_0}+q_{2,j_0}C_2)/(p'_{2,j_0}+q'_{2,j_0}C_2)=&C_2(p_{1,j_0}+q_{1,j_0}C_2)/( p'_{1,j_0}+ q'_{1,j_0}C_2),&\\
C_1(p_{2,j_0}+q_{2,j_0}C_2)( p'_{1,j_0}+ q'_{1,j_0}C_2)=&(p_{1,j_0}C_2+q_{1,j_0}C_2^2)(p'_{2,j_0}+q'_{2,j_0}C_2),&\\
C_1(p_{2,j_0}+q_{2,j_0}C_2)( p'_{1,j_0}+ q'_{1,j_0}C_2)=&(p_{1,j_0}C_2+q_{1,j_0}(C_1^3-(n-2p)C_1C_2))(p'_{2,j_0}+q'_{2,j_0}C_2),&\\
C_1(p_{2,j_0}+q_{2,j_0}C_2)( p'_{1,j_0}+ q'_{1,j_0}C_2)=&(p_{1,j_0}C_1^3+(q_{1,j_0}-p_{1,j_0}(n-2p)C_1)C_2)(p'_{2,j_0}+q'_{2,j_0}C_2),&\\
p'_{1,j_0}C_1p_{2,j_0}+q'_{1,j_0}C_1q_{2,j_0}C_1^3+C_1( p'_{1,j_0}q_{2,j_0}&+q'_{1,j_0}p_{2,j_0}+(n-2p)q'_{1,j_0}q_{2,j_0})C_2 =&\\
p_{1,j_0}C_1^3p'_{2,j_0}+(q_{1,j_0}-p_{1,j_0}(&n-2p)C_1)q'_{2,j_0}C_1^3 +\\
(p_{1,j_0}C_1^3q'_{2,j_0}+(q_{1,j_0}-p_{1,j_0}(&n-2p)C_1)p'_{2,j_0}+(q_{1,j_0}-p_{1,j_0}(n-2p)C_1)q'_{2,j_0}(n-2p)C_1)C_2,
\end{flalign*}
 but the last equation contradicts Corollary \ref{cor}: let us focus on the term without $C_2$, on the left-hand side of the last equation  we have a polynomial on $C_1$ with nonzero term of degree one but on the right-hand side we have a polynomial with null term of order one. 

This contradiction is a consequence of the assumption of the existence of a basis. Therefore, $\mathcal D(W)_{\mathfrak m}$ is not free. Hence $\mathcal D(W)$ is not locally free over $\mathcal Z(W)$ as asserted.
\end{proof}

\section{On the Weyl algebra}\label{weyl}

In this final section we exhibit a different way to regard
 the algebra $\mathcal D(W)$. We show that under certain circumstances, one has that the algebra $\mathcal D(W)$, for a given weight $W$, is the centralizer of its center in a bigger algebra, the Weyl algebra.
 \smallskip
    
Notice that, for any weight $W$ of size $N$, the algebra $\mathcal D(W)$ is a subalgebra of the Weyl algebra $D$ over $M_N$
of all linear right-hand side ordinary differential operators with coefficients in $M_N[x]$:
$${\mathbf D} =\{D=\sum_{i=0}^s\partial^i F_i: F_i \in M_N[x]\}.$$

\begin{thm}
 Given a weight $W$, if there exists an operator $C\in\mathcal D(W)$ such that $\Lambda_n(C)$ is a scalar matrix for every $n$ and
 $\Lambda_k(C)=\Lambda_j(C)$ only if $k=j$, then 
 $$
C_{\mathbf{D}}\left( C\right)=\mathcal D(W).
$$
In particular, $
C_{\mathbf{D}}\left( \mathcal Z(W)\right)=\mathcal D(W).
$
\end{thm}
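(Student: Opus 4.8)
The plan is to prove the two inclusions $\mathcal D(W)\subseteq C_{\mathbf D}(C)$ and $C_{\mathbf D}(C)\subseteq \mathcal D(W)$ separately. The first inclusion is the easy direction: if $D\in\mathcal D(W)$, then by the representation property (Proposition following \eqref{D(W)}) the maps $D\mapsto\Lambda_w(D)$ are algebra homomorphisms, so $\Lambda_w(DC)=\Lambda_w(D)\Lambda_w(C)=\Lambda_w(C)\Lambda_w(D)=\Lambda_w(CD)$ for every $w$, using that $\Lambda_w(C)$ is a scalar matrix and hence central in $M_N$. Since the family $\{\Lambda_w\}_{w\ge0}$ separates elements of $\mathcal D(W)$, this forces $DC=CD$, so $D$ commutes with $C$ inside $\mathbf D$; thus $\mathcal D(W)\subseteq C_{\mathbf D}(C)$.

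The substance is the reverse inclusion. First I would observe that $\mathcal D(W)$ consists exactly of those $D\in\mathbf D$ having the monic orthogonal polynomials $Q_w$ as eigenfunctions, i.e.\ $Q_wD=\Lambda_w Q_w$ for scalars-times-matrices $\Lambda_w\in M_N$; so it suffices to show that any $D\in\mathbf D$ commuting with $C$ has each $Q_w$ as an eigenvector. The key device is that $C\in\mathcal D(W)$ already acts diagonally on the basis $\{Q_w\}$, with $Q_wC=\Lambda_w(C)Q_w=\lambda_w Q_w$ where $\lambda_w$ is the scalar with $\Lambda_w(C)=\lambda_w\mathbf I$, and the injectivity hypothesis says the $\lambda_w$ are pairwise distinct. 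The plan is to let $C$ act on $M_N[x]$ from the right and decompose $M_N[x]$ into $C$-eigenspaces. Because $Q_w$ is monic of degree $w$ and $C$ preserves the degree filtration (it has $Q_w$ as eigenfunctions), the eigenvalue $\lambda_w$ depends only on $w$ and each eigenvalue is simple in the sense that the $\lambda_w$-eigenspace, as a right $M_N$-module, is generated by $Q_w$. Then for $D\in C_{\mathbf D}(C)$ I would show $Q_wD$ lies again in the $\lambda_w$-eigenspace of $C$: indeed $(Q_wD)C=Q_w(DC)=Q_w(CD)=(Q_wC)D=\lambda_w(Q_wD)$, where I must be careful that $\lambda_w$ being scalar lets it pass through $D$. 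Hence $Q_wD=\Lambda_w Q_w$ for some $\Lambda_w\in M_N$, which is precisely the condition $D\in\mathcal D(W)$.

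The main obstacle I anticipate is the bookkeeping needed to turn ``$C$ has simple spectrum on the $Q_w$'' into a genuine eigenspace decomposition of $M_N[x]$ under the right action of $C$, since $M_N[x]$ is a module over the noncommutative ring $M_N$ rather than a vector space, and a priori an element of a fixed degree need not be a scalar multiple of $Q_w$ but rather a right-$M_N$-combination. The resolution is that $\{Q_w\}_{w\ge0}$ is a basis of $M_N[x]$ as a free left $M_N$-module (every $P$ of degree $\le d$ is uniquely $\sum_{w\le d}M_w Q_w$ with $M_w\in M_N$), so writing $PC=\sum_w M_w\lambda_w Q_w$ shows $P$ is a $\lambda_w$-eigenvector exactly when all $M_{w'}$ with $\lambda_{w'}\neq\lambda_w$ vanish; the distinctness of the $\lambda_w$ then yields that the $\lambda_w$-eigenspace is precisely $\{M Q_w : M\in M_N\}$. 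I would make sure the left-module structure and the right action of $C$ interact correctly, namely that $(M Q_w)C=M(Q_w C)=M\lambda_w Q_w=\lambda_w M Q_w$, which again uses centrality of the scalar $\lambda_w$. The final statement $C_{\mathbf D}(\mathcal Z(W))=\mathcal D(W)$ then follows immediately by taking $C\in\mathcal Z(W)$ to be the hypothesized central operator: $\mathcal D(W)\subseteq C_{\mathbf D}(\mathcal Z(W))\subseteq C_{\mathbf D}(C)=\mathcal D(W)$, where the first inclusion is because central elements of $\mathcal D(W)$ commute with all of $\mathcal D(W)$ and the second is monotonicity of centralizers.
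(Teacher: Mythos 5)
Your proposal is correct and follows essentially the same route as the paper: both directions are handled identically, with the key step being to expand $Q_wD$ in the left-$M_N$-module basis $\{Q_{w'}\}$ of $M_N[x]$, apply the commutation $(Q_wD)C=(Q_wC)D$, and use the pairwise-distinct scalar eigenvalues of $C$ to annihilate all off-diagonal coefficients. Your write-up merely dresses this computation in eigenspace language and spells out the easy inclusion (and the fact that the hypotheses force $C\in\mathcal Z(W)$) which the paper leaves as ``clear.''
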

\begin{proof}
  Let $D\in C_{\mathbf{D}}\left( C\right)$, for a fixed $n\in\NN_0$ we have 
$$
P_nD=\sum_{j=0}^{K }A_{n,j} P_j,
$$
with $\{A_{n,j}\}$ matrices and $K\in\NN_0$. Since $DC=CD$, we have 
$$
\sum_{j=0}^K A_{n,j}\,\Lambda_j(C) \,P_j=\sum_{j=0}^K \Lambda_n(C)\,A_{n,j} \,P_j,
$$
thus
$$
A_{n,j}\,\Lambda_j(C)=\Lambda_n(C)\,A_{n,j}, \qquad \text{for every }j.
$$
Therefore, $A_{n,j}=0$ for all $j\neq n$. Having then $P_nD= A_{n,n} P_n$, which implies $D\in\mathcal D(W)$. Therefore $
C_{\mathbf{D}}\left( C\right)\subset\mathcal D(W).
$

Since $\Lambda_n(C)$ is a scalar matrix, it is clear that $
C_{\mathbf{D}}\left( C\right)\supset\mathcal D(W)$. The first assertion follows. For the second assertion it is enough to observe that in general we have $
C_{\mathbf{D}}\left( C\right)\supset C_{\mathbf{D}}\left(\mathcal Z(W)\right)
\supset\mathcal D(W)$. 
\end{proof}
With a similar proof, we can obtain the following result.
\begin{thm}
 Given a weight $W$, if for every $n$ there exists an operator $C_n\in\mathcal Z(W)$ such that $\Lambda_k(C_n)$ is a scalar matrix for every $k$ and
 $\Lambda_k(C_n)=\Lambda_n(C_n)$ only if $k=n$, then 
 $$
C_{\mathbf{D}}\left( \mathcal Z(W)\right)=\mathcal D(W).
$$
\end{thm}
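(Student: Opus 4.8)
The plan is to run the argument of the preceding theorem essentially verbatim, feeding in at each index $n$ the operator $C_n$ tailored to that index rather than a single global separating operator. The only property of $C$ that the previous proof actually used was that, for the particular $n$ under examination, the scalars $\Lambda_j(C)$ with $j\ne n$ were all distinct from $\Lambda_n(C)$; since the argument processes one value of $n$ at a time, a family $\{C_n\}$ satisfying this separation property locally at each $n$ is enough.

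Concretely, I would first take $D\in C_{\mathbf{D}}(\mathcal Z(W))$ and fix $n\in\NN_0$. Because $D$ has polynomial matrix coefficients, $P_nD$ is again a matrix polynomial, so it expands uniquely as $P_nD=\sum_{j=0}^{K}A_{n,j}P_j$ with $A_{n,j}\in M_N$ and $K\in\NN_0$, using that the $P_j$ form a left $M_N$-basis of $M_N[x]$. Next I would exploit that $C_n\in\mathcal Z(W)$, so $DC_n=C_nD$, while $P_nC_n=\Lambda_n(C_n)P_n$ and $P_jC_n=\Lambda_j(C_n)P_j$. Computing $P_n(DC_n)=(P_nD)C_n$ and $P_n(C_nD)=(P_nC_n)D$ and equating the two expansions gives $\sum_j A_{n,j}\Lambda_j(C_n)P_j=\sum_j\Lambda_n(C_n)A_{n,j}P_j$, whence $A_{n,j}\Lambda_j(C_n)=\Lambda_n(C_n)A_{n,j}$ for every $j$ by uniqueness. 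Since each $\Lambda_j(C_n)$ is a scalar matrix and, by hypothesis, $\Lambda_j(C_n)\ne\Lambda_n(C_n)$ whenever $j\ne n$, this forces $A_{n,j}=0$ for all $j\ne n$. Thus $P_nD=A_{n,n}P_n$, and letting $n$ vary we conclude $D\in\mathcal D(W)$, i.e. $C_{\mathbf{D}}(\mathcal Z(W))\subset\mathcal D(W)$.

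For the reverse inclusion there is nothing to separate: by definition $\mathcal Z(W)$ is the center of $\mathcal D(W)$, so every element of $\mathcal Z(W)$ commutes with every element of $\mathcal D(W)$, giving $\mathcal D(W)\subset C_{\mathbf{D}}(\mathcal Z(W))$ immediately. The only genuine subtlety, and the sole point where this statement differs from the previous one, is the step that kills the $A_{n,j}$: here no single operator need separate all eigenvalue indices at once, and one must be careful to invoke, for the fixed index $n$, precisely the operator $C_n$ whose eigenvalue sequence isolates $n$ from the rest. I expect this to be the only place requiring attention; everything else is a transcription of the earlier computation.
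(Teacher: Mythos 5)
Your proof is correct and is exactly the adaptation the paper intends: the paper proves this theorem by the phrase ``with a similar proof,'' i.e.\ by rerunning the previous theorem's argument with the index-specific operator $C_n$ killing the coefficients $A_{n,j}$, $j\ne n$, just as you do. Your handling of the reverse inclusion (immediate from $\mathcal Z(W)$ being the center of $\mathcal D(W)$) is also the right simplification, since here the $C_n$ lie in $\mathcal Z(W)$ rather than merely in $\mathcal D(W)$.
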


\begin{cor}
 For the weight considered in the present paper and the weight considered in \cite{T11} we have 
  $$
C_{\mathbf{D}}\left( \mathcal Z(W)\right)=\mathcal D(W).
$$
\end{cor}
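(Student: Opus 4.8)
The plan is to obtain the Corollary directly from the two preceding theorems of this section by checking their hypotheses for each of the two weights separately. In both cases it suffices to exhibit a single operator $C$ lying in the center $\mathcal Z(W)$ whose eigenvalue sequence $\{\Lambda_w(C)\}_{w\ge0}$ consists of scalar matrices and is injective, in the sense that $\Lambda_k(C)=\Lambda_j(C)$ forces $k=j$. Indeed, the first of the two theorems then yields $C_{\mathbf D}(C)=\mathcal D(W)$, and since $C\in\mathcal Z(W)$ its ``in particular'' clause gives $C_{\mathbf D}(\mathcal Z(W))=\mathcal D(W)$. So the whole argument reduces to producing one such central operator for $W_{p,n}$ and one for the weight $\tilde W$ of \cite{T11}.

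For the weight $W_{p,n}$ of the present paper I would take $C=C_1=(D_3+D_4)^2$. By \eqref{c1c2} its eigenvalue is the scalar matrix $\Lambda_w(C_1)=p_1(w)\mathbf I$ with $p_1(w)=(w+p)(w+p+1)(w+n-p+1)(w+n-p)$, so the only thing to verify is injectivity of $p_1$ on $\NN_0$. I would do this by writing $p_1(w)=\bigl((w+p+\tfrac12)^2-\tfrac14\bigr)\bigl((w+n-p+\tfrac12)^2-\tfrac14\bigr)$ and noting that, because $0<p<n/2$, for $w\ge0$ one has $w+p>\tfrac12$ and $w+n-p>\tfrac12$; hence each factor is positive and strictly increasing, so $p_1$ is a product of positive increasing functions and is strictly increasing on $[0,\infty)$, in particular injective on $\NN_0$. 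The first theorem then applies with $C=C_1$.

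For the weight $\tilde W$ of \cite{T11} I would follow the same strategy. Transporting $C_1$ through the isomorphism $\varphi$ of Section \ref{Str} produces a central operator $\varphi(C_1)\in\mathcal Z(\tilde W)$, whose eigenvalue $\Lambda_w(\varphi(C_1))$ is again a scalar polynomial in $w$ (by the same reasoning as in Section \ref{Cen}, central elements have scalar eigenvalues); I would read this polynomial off from the eigenvalue computations in \cite{T11} and check that it is strictly monotone, hence injective, on $\NN_0$. I expect the main obstacle to be precisely this injectivity step: a degree-four eigenvalue polynomial is generally \emph{not} injective on all of $\RR$, so one genuinely has to exploit the positivity of its factors on the relevant range $w\ge0$, exactly as for $p_1$ above, rather than hoping for global monotonicity. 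Should the transported generator fail to be globally monotone for the Hermite normalization, I would fall back on the second of the two theorems, which requires only the weaker, index-wise condition: for each fixed $n$ a central operator whose scalar eigenvalue attains the value $\Lambda_n$ at $w=n$ and nowhere else. This is considerably more flexible and is straightforward to arrange by taking a suitable polynomial in the two central generators of $\mathcal Z(\tilde W)$, so either route delivers the stated identity $C_{\mathbf D}(\mathcal Z(W))=\mathcal D(W)$.
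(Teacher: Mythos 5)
Your proposal is correct and is essentially the paper's own (implicit) argument: the corollary is stated as an immediate application of the two preceding theorems, taking $C=C_1$ with scalar eigenvalue $p_1(w)\mathbf{I}$ from \eqref{c1c2}, which is strictly increasing on $[0,\infty)$ and hence injective on $\NN_0$, with the same verification carried out for the weight of \cite{T11} using its central generators and their scalar eigenvalues. One harmless slip: $0<p<n/2$ only guarantees $w+p>0$ and $w+n-p>0$ (not $>\tfrac12$), but positivity of each quadratic factor is all that your monotonicity argument actually needs.
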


 \bibliographystyle{alpha}

\bibliography{ref15}

\end{document}